\newcommand{\real}{\mathbb{R}}
\newcommand{\rN}{ {\mathbb{R}^N} }
\newcommand{\rmd}{\mathrm{d}}
\newcommand{\M}{\mathcal{M}}
\numberwithin{equation}{section}
\newtheorem{theorem}{Theorem}[section]
\newtheorem{lem}[theorem]{Lemma}
\newtheorem{prop}[theorem]{Proposition}
\theoremstyle{definition}
\newtheorem{rmk}[theorem]{Remark}
\begin{document}
\title{\bf\Large Radial solutions for Hénon type fully nonlinear equations in annuli and exterior domains}
\author[1]{Liliane Maia\footnote{l.a.maia@mat.unb.br}}
\author[2]{Gabrielle Nornberg\footnote{gnornberg@dim.uchile.cl}}
\affil[1]{\small Departamento de Matemática, Universidade de Brasília, Brazil}
\affil[2]{\small Instituto de Ciências Matemáticas e de Computação, Universidade de São Paulo, Brazil}

{\date{\today}}

\maketitle

{\small\noindent{\bf{Abstract.}} 
In this note we study existence of positive radial solutions in annuli and exterior domains for a class of nonlinear equations driven by Pucci extremal operators subject to a Hénon type weight.
Our approach is based on the shooting method  applied to the corresponding ODE problem, energy arguments, and the associated flow  of an autonomous quadratic dynamical system.
}
\medskip

{\small\noindent{\bf{Keywords.}} {Fully nonlinear equations; radial positive solutions; dynamical system; shooting method.}
	
	\medskip
	
	{\small\noindent{\bf{MSC2020.}} {35J15, 35J60, 35B09, 34A34.}

\section{Introduction}\label{intro}

In this note we study existence of positive radial solutions of fully nonlinear elliptic partial differential equations in the form
\begin{align}\label{P} 
\left\{
\begin{array}{rclcl}
\mathcal{M}_{\lambda,\Lambda}^\pm (D^2 u)+ |x|^a \,u^p &=&0 &\mbox{in} & \;\Omega \\
u&>& 0 &\mbox{in} & \;\Omega 
\\
u&=& 0 &\mbox{on} & \partial\Omega
\end{array}
\right.
\end{align}
where $p>1$, $a>-1$, $0<\lambda\le \Lambda$, and $\Omega$ is either an annulus or an exterior domain in $\rN$ for $N\geq 3$. Additionally, we suppose
 $\tilde N_+>2$ in the case of $\M^+_{\lambda,\Lambda}$, where $\tilde N_\pm$ are the dimension-like numbers
\begin{center}
	$\tilde N_+ = \frac{\lambda}{\Lambda} (N-1)+1$, \qquad $\tilde N_- = \frac{\Lambda}{\lambda} (N-1)+1$.
\end{center}
Here $\mathcal{M}_{\lambda,\Lambda}^\pm$ are the Pucci's extremal operators, which play an essential role in stochastic control theory and mean field games. 
We deal with classical solutions of \eqref{P} that are $C^2$ in $\Omega$.

\smallskip

In \cite{MNPscalar}  nonexistence results in exterior domains for weighted equations as in \eqref{P} via a dynamical system approach were established. However, proving the existence of such solutions, as well as solutions in annuli, is a difficult task in terms of that approach, since the orbits in the flow there present blow-up discontinuities.
Our goal in this work is to complement the analysis in \cite{MNPscalar}, by showing existence of radial solutions in exterior domains and annuli.

We mention that the analysis of the associated ODE problem for proving existence of annular or exterior domain solutions has been performed in many papers in semilinear cases \cite{BCManel87, LinPai91, NiNussbaum}.

In the case when $a=0$, results of this nature were obtained in \cite{GLPradial, GILexterior2019}. 
The analysis in \cite{GILexterior2019} was performed in light of the change of variables in \cite{FQTaihp}. In \cite{MNPscalar} we have already noticed that employing quadratic dynamical systems is effective to deal with these problems, in a simple and unified way.
Here, our shooting arguments to obtain existence of annular solutions are inspired by those in \cite{GLPradial}. On the other hand, in what concerns the existence in exterior domains we introduce an alternative dynamical system, which is different from those in \cite{GILexterior2019} and \cite{MNPscalar}, of quadratic type which do not present blow-up discontinuities.

\smallskip

For solutions in annuli,  our main result reads as follows.
\begin{theorem}\label{teo anel}
For any $p>1$, and $0<\frak a< \frak b<+\infty $, problem \eqref{P} has a positive  radial solution in the annulus\vspace{-0.1cm}
\begin{align*}
\textrm{	$\Omega =\{ x\in \rN \, :\, \frak{a}<|x|<\frak{b}\}$, \; with $0<\frak a<\frak b< +\infty$. }
\end{align*}
\end{theorem}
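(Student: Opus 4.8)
The plan is to reduce the PDE to a radial ODE and then run a shooting argument. Writing $u(x) = u(r)$ with $r = |x|$, problem \eqref{P} for the operators $\M^\pm_{\lambda,\Lambda}$ becomes
\begin{align*}
\left( \psi_\pm(u'') \, \frac{r^{N-1}}{\text{(appropriate power)}} \right)' \ \text{behaves like} \ -\, r^{\tilde N_\pm - 1}\, r^{a}\, u^p,
\end{align*}
or, more precisely, the equation can be written in the non-divergence form $c_\pm(u'') \, u'' + \tilde N_\pm \frac{1}{r}\,( \text{linear in } u' ) + r^a u^p = 0$, where the coefficient in front of $u''$ switches between $\lambda$ and $\Lambda$ according to the sign of $u''$. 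The key structural fact, already exploited in \cite{MNPscalar}, is that on intervals where the solution is concave (which is where we expect the profile of a positive solution with zero boundary data to live near its maximum) the equation is a genuine quasilinear ODE with a first integral / energy functional. So the first step is to set up this radial reduction carefully and record the monotonicity of the relevant coefficient.

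Second, I would fix an interior radius $\rho \in (\frak a, \frak b)$ and consider the two-parameter shooting problem: solve the initial value problem for the ODE starting at $r = \rho$ with $u(\rho) = d > 0$ and $u'(\rho) = 0$ (the natural place to shoot from is the prospective maximum point), and track the first radius $R_+(d) > \rho$ where $u$ hits zero going outward and the first radius $R_-(d) < \rho$ where $u$ hits zero going inward. One shows, using continuous dependence on $d$ and the scaling structure of the equation (the Hénon weight $|x|^a u^p$ is homogeneous under a combined scaling of $u$ and $r$), that $R_\pm$ are continuous in $d$ and that as $d \to 0^+$ and $d \to +\infty$ the pair $(R_-(d), R_+(d))$ sweeps out a range of annuli; an intermediate-value/connectedness argument in the $(\frak a, \frak b)$ plane then produces a $d$ for which $R_-(d) = \frak a$ and $R_+(d) = \frak b$ simultaneously. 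This is precisely the strategy of \cite{GLPradial} that the introduction says we are following, so I would lean on that scheme, adapting the a priori bounds to the fully nonlinear setting via the dimension-like number $\tilde N_\pm$.

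The technical core — and the main obstacle — is showing that the shooting solution actually vanishes at finite radii on both sides of $\rho$, with the right qualitative behavior (the solution stays positive and has a single critical point in between), and that the relevant map $d \mapsto (R_-(d), R_+(d))$ is proper enough to force the two-dimensional intermediate value argument through. Inward ($r < \rho$) this requires controlling the solution as $r$ decreases; outward the difficulty is the opposite, preventing the solution from decaying to a positive limit or oscillating. Here the energy argument mentioned in the abstract enters: one defines an energy of the form $E(r) = \Phi(u'(r)) + \frac{1}{p+1} r^a u(r)^{p+1}$ (with $\Phi$ the Pucci-adapted primitive of the first-integral expression), computes $E'(r)$ and shows it has a sign controlled by $\tilde N_\pm$ and $a$, which gives monotonicity of the energy and hence the needed a priori bounds and the fact that $u$ cannot have an interior second critical point before vanishing. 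I expect one also needs to check the hypothesis $\tilde N_+ > 2$ is exactly what guarantees the outward solution reaches zero (rather than converging to a positive constant, as happens in the "subcritical dimension" obstruction). Finally I would verify that the ODE solution produced is $C^2$ and that concavity/convexity of $u$ is consistent with the branch of the Pucci operator used, so that it is a genuine classical solution of \eqref{P}; since $u$ has a unique maximum and is monotone on either side, this regularity check is routine once the sign of $u''$ is pinned down.
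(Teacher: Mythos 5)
There is a genuine gap, and it lies in the shooting setup itself. You shoot from a prospective interior maximum, $u(\rho)=d$, $u'(\rho)=0$, and must then satisfy \emph{two} matching conditions, $R_-(d)=\frak a$ and $R_+(d)=\frak b$, with essentially \emph{one} free parameter $d$ (the scaling of Remark \ref{rescaling} does not add an independent degree of freedom: it multiplies $R_-$ and $R_+$ by the same factor $\gamma^{-1/\alpha}$, a point the paper stresses explicitly — rescaling ``changes both extrema of the annulus''). So the image of $d\mapsto (R_-(d),R_+(d))$ is a curve in the $(\frak a,\frak b)$--plane, and your claim that it ``sweeps out a range of annuli'' covering the prescribed pair is unsubstantiated. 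To make your scheme work one would have to prove that the scale-invariant ratio $R_+(d)/R_-(d)$ is continuous, finite, and attains every value in $(1,+\infty)$ — none of which you address; note in particular that for supercritical $p$ the outward trajectory need not vanish at a finite radius ($R_+(d)=+\infty$ is possible, as the exterior-domain solutions of Theorem \ref{teo exterior} show), so even continuity/properness and connectedness of the admissible set of $d$ are nontrivial. You also leave the quantitative limiting behaviour as $d\to 0$ and $d\to\infty$ entirely unproved, while in this fully nonlinear, weighted setting these are exactly the delicate steps (they are not inherited from the semilinear case).

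By contrast, the paper avoids the two-condition matching altogether: it shoots from the \emph{inner} boundary, $u(\frak a)=0$, $u'(\frak a)=\delta>0$ as in \eqref{shooting der}, so the inner Dirichlet condition is automatic and only the maximal positivity radius $\rho_\delta$ must hit $\frak b$. The proof then rests on two asymptotic lemmas — $\rho_\delta\to+\infty$ as $\delta\to 0$ (Lemma \ref{lemma delta 0}) and $\rho_\delta\to\frak a$ as $\delta\to+\infty$ together with a lower bound $\rho_\delta\ge\frak a+c_0$ for bounded $\delta$ (Lemma \ref{lema delta infty}) — proved via the two monotone energies $\mathcal{E}_\sigma$ and $E_\sigma$ (Propositions \ref{prop E2} and \ref{prop energies}) and a comparison with the principal eigenvalue $\lambda_1^+$ of the Pucci operator on thin annuli, followed by an intermediate value argument on the unbounded connected component of $\mathcal{D}=\{\delta:\rho_\delta<+\infty\}$ (which also handles the values of $\delta$ with $\rho_\delta=+\infty$). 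Your proposal contains the right ingredients in spirit (radial reduction, energies, eigenvalue-type obstructions), but as written the matching argument does not close, and the hard estimates are only announced, not carried out.
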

Note that solutions of \eqref{P} may not be radial in general, for instance the Gidas-Ni-Nirenberg type symmetry result of \cite{BdaLioSym} does not hold for annular domains.
Moreover, since $v=-u$ solves $\mathcal{M}^\mp_{\lambda,\Lambda} (v)+|x|^a \,|u|^{p-1}u$ in $\Omega$, then Theorem \ref{teo anel} also proves the existence of a negative solution for the corresponding problem.

The proof of Theorem \ref{teo anel} relies on a careful study of the ODE problem, shooting method and energy arguments. 
We mention that solutions in annuli can be identified in correspondence with orbits in  the dynamical system, but the interior and exterior radii are not explicit in that approach neither can be obtained for an arbitrary annulus via rescaling.

As far as exterior domain solutions are concerned, we obtain the following result. We recall $p^*_{a\pm}$ are the critical exponents defined in \cite{MNPscalar} for the operators $\M^\pm_{\lambda,\Lambda}$. They are the threshold for the existence and nonexistence of radial positive regular solutions, that is, solutions differentiably defined at $r=0$.
\begin{theorem}[Exterior domain]\label{teo exterior}
For any $p>p^{*}_{a\pm}$ and $R>0$, by setting $\Omega =\rN\setminus B_R$, it holds:
\begin{enumerate}[(i)]
\item there exists a unique fast decaying solution of \eqref{P};

\item there exist infinitely many solutions of \eqref{P} with either slow or pseudo-slow decay.
\end{enumerate}
\end{theorem}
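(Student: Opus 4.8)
The plan is to reduce the PDE to an ODE, since positive radial solutions $u(r)$ of \eqref{P} in $\Omega=\rN\setminus B_R$ satisfy
\begin{align*}
\left\{
\begin{array}{l}
\Lambda \left( u'' + \frac{N-1}{r} u' \right)^{\!\pm} - \lambda \left( u'' + \frac{N-1}{r} u' \right)^{\!\mp} + r^a u^p = 0, \quad r>R,\\
u(R)=0, \quad u>0 \text{ for } r>R,
\end{array}
\right.
\end{align*}
where we exploit that for radial functions $\M^\pm$ reduces, according to the sign of $u''+\frac{N-1}{r}u'$, to a Laplace-type operator with the dimension-like parameters $\tilde N_\pm$. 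To handle both operators and the weight in a unified way I would introduce a quadratic autonomous dynamical system in Emden--Fowler type variables. Concretely, setting (roughly) $x(t)=-\,r u'/u$ and $y(t)= r^{a+2} u^{p-1}/\text{(diffusion coefficient)}$ with $t=\log r$, one checks that $u$ solves the ODE if and only if $(x,y)$ solves a planar system $\dot x = P(x,y)$, $\dot y = Q(x,y)$ with $P,Q$ quadratic polynomials whose coefficients depend on $N,\lambda,\Lambda,a,p$; the key gain over \cite{MNPscalar} is that in these coordinates the relevant orbits stay in a bounded invariant region and have no blow-up discontinuities. The boundary condition $u(R)=0$ corresponds to orbits emanating, as $t\to -\infty$, from the point $x=1$ (equivalently $-ru'/u\to$ a finite value dictated by $u\sim c(r-R)$ near $r=R$), while the decay behavior as $r\to\infty$ is encoded by which equilibrium of the system the orbit approaches as $t\to+\infty$.

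With the system in hand I would classify its equilibria. There is the origin $O=(0,0)$, a point $A$ on the $x$-axis corresponding to the singular/Fowler solution $u\sim c\, r^{-\beta}$ (the fast-decay rate), and an interior equilibrium $B$ in the open first quadrant whose existence is exactly where the hypothesis $p>p^*_{a\pm}$ enters — at the critical exponent $B$ collides with $A$ or escapes to infinity, so for supercritical $p$ one gets a genuine interior fixed point. Then I would: (a) linearize at $O$, showing $O$ is a saddle (or a node) whose stable/unstable directions determine slow versus pseudo-slow decay rates; (b) linearize at $A$, showing it is a stable node or spiral along the relevant branch so that exactly one orbit converges to it — this gives the unique fast-decaying solution of part (i); (c) analyze $B$, typically a spiral source or center-like point, and use a shooting/continuity argument on the one-parameter family of orbits leaving the boundary point $x=1$. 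The unit parameter is the shooting slope $u'(R)>0$ (after a scaling normalization it is essentially a single free parameter); as it varies, the corresponding orbit sweeps a region of phase space, and by an intermediate-value argument there is exactly one value landing on the stable manifold of $A$ (part (i)), while all nearby orbits either spiral toward $B$ or converge to $O$ along the slow manifold, producing the infinitely many slow/pseudo-slow solutions of part (ii). Monotonicity of the flow in the parameter, together with a priori bounds ensuring no orbit escapes to infinity in finite time (this is where boundedness of the quadratic system is used), upgrades "there exists" to "unique" in (i).

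The main obstacle I expect is step (c): controlling the global behavior of the one-parameter shooting family and proving the dichotomy (converge to $A$ vs.\ oscillate/converge to $O$) is clean with no orbit doing something pathological in between. Two sub-difficulties stand out. First, establishing that the set of shooting parameters whose orbit converges to $A$ is a single point rather than an interval or a Cantor-type set: this requires a transversality/monotonicity lemma showing the stable manifold of $A$ is crossed exactly once by the family, which in turn needs a sign-definite quantity (a Lyapunov-type functional, likely the natural energy $E(r)=\frac{1}{2}(u')^2 + \frac{r^a}{p+1}u^{p+1}$ adjusted by the dimension-like weight) that is strictly monotone along orbits and separates the phase plane. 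Second, distinguishing genuine slow decay from pseudo-slow decay in part (ii) and showing both occur infinitely often amounts to a finer analysis near $O$: whether orbits reach $O$ tangent to the slow eigendirection (slow decay $u\sim c\,r^{-\gamma_{\mathrm{slow}}}$) or wind around $B$ infinitely before settling (pseudo-slow), which one resolves by examining the Poincaré return map near $B$ and the linearization eigenvalues at $O$. I would handle the energy monotonicity first since it underpins both the uniqueness in (i) and the multiplicity in (ii), then close the argument with the equilibrium classification and a standard connectedness/shooting argument in the plane.
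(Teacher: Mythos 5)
Your overall frame (the variables $(x,z)=(-ru^{\prime}/u,\;r^{2+a}u^{p-1})$, $t=\log r$, the classification of equilibria, and the distinguished orbit tending to the equilibrium on the $x$-axis producing the fast-decaying solution) matches the paper's, but several steps as written would fail. First, your translation of the Dirichlet condition is wrong: if $u(R)=0$ and $u^{\prime}(R)>0$, then $u\sim u^{\prime}(R)(r-R)$ gives $x=-ru^{\prime}/u\to-\infty$ as $r\to R^{+}$, i.e.\ the orbit blows up in the $x$-component at the \emph{finite} backward time $t_{0}=\log R$; it does not emanate as $t\to-\infty$ from a finite point ``$x=1$''. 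This finite-time backward blow-up in $2Q$ (Lemma \ref{lemma 2Q}) is exactly how an orbit is recognized as an exterior-domain solution, and it also contradicts your claim that the new coordinates keep orbits in a bounded region with no blow-ups: the gain over the $(X,Z)$ system of \cite{MNPscalar} is only that $z$ stays finite at critical points of $u$, so orbits cross the $z$-axis continuously into $2Q$; the blow-up of $x$ at zeros of $u$ remains and is essential. Second, your local analysis is off in ways that break both parts of the theorem: $A_{0}=(\tilde N_{\pm}-2,0)$ is a \emph{saddle} for $p>p^{s,a}_{\pm}$ (Proposition \ref{local study stationary points}), not a stable node or spiral — were it a stable node, uniqueness in (i) would be false; uniqueness comes from the stable manifold in $1Q$ being the single trajectory $\Upsilon_{p}$, combined with the scaling of Remark \ref{rescaling} to move the zero $r_{0}$ to the prescribed $R$. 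Likewise, slow decay corresponds to orbits converging forward in time to the interior equilibrium $M_{0}$ (so $u\sim c\,r^{-\alpha}$), and pseudo-slow decay to orbits approaching a periodic orbit around $M_{0}$; it is not convergence to $O$ ``along the slow manifold'', since $O$ is a saddle whose stable manifold is the invariant $x$-axis, so no orbit in the open first quadrant reaches it forward in time.

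Moreover, the hypothesis $p>p^{*}_{a\pm}$ does not enter as the condition for the interior equilibrium to exist (that is $p>p^{s,a}_{\pm}$); its role, via \cite[Lemma 4.11]{MNPscalar}, is to guarantee that $\Upsilon_{p}$, followed backwards, leaves $1Q$ through the $z$-axis (rather than connecting to $O$, which at $p=p^{*}_{a\pm}$ yields the regular fast-decaying solution), after which Lemma \ref{lemma 2Q} forces $x\to-\infty$ at finite backward time, i.e.\ a zero of $u$ at some $r_{0}>0$. The paper's proof therefore does not shoot forward from the boundary at all: it takes the forward-time orbits already classified in \cite{MNPscalar} (the unique $\Upsilon_{p}$, and the infinitely many orbits converging to $M_{0}$ or to cycles around it) and continues them backwards through $2Q$, then rescales. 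Your shooting-plus-intermediate-value scheme might be made to work, but as proposed it rests on the misidentified boundary behavior, the wrong local nature of $A_{0}$, and the wrong assignment of decay types to limit sets, so neither the uniqueness in (i) nor the multiplicity dichotomy in (ii) goes through as stated.
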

In addition, the ranges where pseudo-slow exterior domain solutions exist are the same where pseudo-slow decay regular solutions exist in \cite{MNPscalar}, see also \cite{PacellaStolnicky1}.

The choice of using quadratic systems to treat weighted equations is categorical since the new dynamical system variables do not see the weight, as in \cite{MNPscalar}. It is worth mentioning that earlier methods employed might be much more involved, meanwhile a simple variable change which eliminates the weight is not available for Pucci operators.

The text is organized as follows. 
In Section \ref{preliminaries} we recall some preliminary tools on radial solutions and shooting method. 
In Section \ref{section exterior} we write the corresponding quadratic system and use it to prove Theorem \ref{teo exterior}, while Section \ref{section annuli} is devoted to the proof of Theorem \ref{teo anel}.

\section{Preliminaries}\label{preliminaries}

We start by recalling that Pucci's extremal operators $\mathcal{M}^\pm_{\lambda,\Lambda}$, for $0<\lambda\leq \Lambda$, are defined as
$$
\textstyle{\mathcal{M}^+_{\lambda,\Lambda}(X):=\sup_{\lambda I\leq A\leq \Lambda I} \mathrm{tr} (AX)\,,\quad \mathcal{M}^-_{\lambda,\Lambda}(X):=\inf_{\lambda I\leq A\leq \Lambda I} \mathrm{tr} (AX),}
$$
where $A,X$ are $N\times N$ symmetric matrices, and $I$ is the identity matrix.
Equivalently, if we denote by $\{e_i\}_{1\leq i \leq N}$ the eigenvalues of $X$, we can define the Pucci's operators as
\begin{align}\label{def Pucci}
\textstyle{\textrm{$\mathcal{M}_{\lambda,\Lambda}^+(X)=\Lambda \sum_{e_i\ge 0} e_i +\lambda \sum_{e_i<0} e_i$, \;\;\; $\mathcal{M}_{\lambda,\Lambda}^-(X)=\lambda \sum_{e_i \ge 0} e_i +\Lambda \sum_{e_i<0} e_i$}. }
\end{align}
From now on we will drop writing the parameters $\lambda,\Lambda$ in the notations for the Pucci's operators.

When $u$ is a radial function, for ease of notation we set $u(|x|)=u (r)$ for $r=|x|$. If $u$ is $C^2$, the eigenvalues of the Hessian matrix $D^2 u$ are $\{u^{\prime\prime}, \frac{u^\prime (r)}{r}, \ldots, \frac{u^\prime (r)}{r}\}$ where $\frac{u^\prime (r)}{r}$ is repeated $N-1$ times.  

The Lane-Emden system \eqref{P} for $\mathcal{M}^+$ is written in radial coordinates as
\begin{align}\label{P radial m}\tag{$P_+$}
\begin{array}{l}
u^{\prime\prime}\;=\; M_+(-r^{-1}(N-1)\, m_+(u^\prime)-r^a u^p ), \quad u> 0  ,
\end{array}
\end{align}
while for $\M^-$ one has
\begin{align}\label{P radial m-}\tag{$P_-$}
\begin{array}{l}
u^{\prime\prime}\;=\; M_-(-r^{-1}(N-1)\, m_-(u^\prime)-r^a u^p ), \quad u> 0  ,
\end{array}
\end{align}
where $M_\pm$ and $m_\pm$ are the Lipschitz functions
\begin{align}\label{m,M+}
	m_+(s)=
	\begin{cases}
		\lambda s\; \textrm{ if } s\leq 0 \\
		\Lambda s\; \textrm{ if } s> 0
	\end{cases}\;
	\textrm{and}\quad
	M_+(s)=
	\begin{cases}
		s/\lambda\; \textrm{ if } s\leq 0 \\
		s/ \Lambda\; \textrm{ if } s> 0,
	\end{cases}
\end{align}
\vspace{-0.6cm}
\begin{align}\label{m,M-}
	m_-(s)=
	\begin{cases}
		\Lambda s\; \textrm{ if } s\leq 0 \\
		\lambda s\; \textrm{ if } s> 0
	\end{cases}\;
	\textrm{and}\quad
	M_-(s)=
	\begin{cases}
		s/\Lambda\; \textrm{ if } s\leq 0 \\
		s/ \lambda\; \textrm{ if } s> 0.
	\end{cases}
\end{align}

Equations \eqref{P radial m} and \eqref{P radial m-} are understood in the maximal interval where $u$ is positive.

\begin{rmk}\label{rmk maximum point}
A positive function $u$ cannot be at the same time convex and increasing in an interval, since $u^{\prime\prime}(r)<0$ as long as $u'(r)\geq 0$. In particular, any critical point of a positive solution $u$ is a local strict maximum point for $u$. 
\end{rmk}

By solution in annulus or exterior domain solution we mean a solution $u$ of \eqref{P radial m} or \eqref{P radial m-} defined in an interval $[\frak a, \rho)$, for $\frak a\in (0, +\infty)$ and $\rho\leq +\infty$, and verifying the Dirichlet condition $u(\frak a)=\lim_{r\to \rho^-}u(r)=0$. We look at the initial value problem
\begin{align}\label{shooting der}
\begin{cases}
u^{\prime\prime}\;=\; M_\pm\left( -r^{-1}(N-1)\, m_\pm(u^\prime)- r^a |u|^{p-1}u \,\right),\smallskip\\
\quad u (\frak a)=0  , \;\; u^\prime (\frak a)=\delta , \qquad \delta>0 ,
\end{cases}
\end{align}

The equations \eqref{P radial m}, \eqref{P radial m-}, together with \eqref{shooting der} were studied in \cite{GILexterior2019}, \cite{GLPradial}. 

\medskip

For any $p>1$ and for each $\delta>0$, by ODE theory there exists a unique solution $u=u_\delta$ defined in a maximal interval $(\frak a , \rho_\delta)$ where $u$ is positive, $\frak a <\rho_\delta\leq +\infty$.

If $\rho_\delta=+\infty$ we get a positive radial solution in the exterior of the ball $B_{\frak a}$. In the second case, a positive solution in the annulus $(\frak a,\rho_\delta)$ is produced.
Note that equations \eqref{shooting der} are not invariant by rescaling.

\begin{rmk}\label{rmk G}
	All results obtained for $\delta>0$ will be also true for $\delta<0$. Indeed, negative shootings for an operator $F$ can be seen as positive shootings for the operator $G$ defined as $G(x,X)=-F(x,-X)$, which is still elliptic and satisfies all the properties we considered so far. In particular, the negative solutions of $\mathcal{M}^+$ are positive solutions of $\mathcal{M}^-$ in the same domain, and viceversa.	
\end{rmk}

\begin{rmk}\label{rescaling}		
	Let $u$ be a positive radial solution of \eqref{shooting der} with $u=u_{\delta}$, for some positive constant $\delta$ in $[\frak{a},\rho)$. Then the rescaled pair
	\begin{align}\label{eq scaling}
	\textrm{${u}_\gamma =\gamma\, u(\gamma^{\frac{1}{\alpha}} r)$ , \qquad \,$\gamma>0$,}
	\end{align}
for $\alpha$ as in \eqref{critical exponents a}, produces a positive solution pair of the same equation in $({\frak a}{\gamma^{-\frac{1}{\alpha}}}, {\rho}{\gamma^{-\frac{1}{\alpha}}})$ with initial values $u_\gamma
({\frak a}\gamma^{-\frac{1}{\alpha}} )
=0$, as well as $u_\gamma^\prime({\frak a}\gamma^{-\frac{1}{\alpha}} )=\gamma^{1+\frac{1}{\alpha}} \,\delta$.
\end{rmk}

\textbf{Notation.} Whenever $p>1$ and $\tilde N_+>2$, we set
\begin{align}\label{critical exponents a}
\textstyle {p^{p,a}_\pm=\frac{\tilde{N}_\pm+2a+2}{\tilde{N}_\pm-2},	\quad p^{s,a}_\pm=\frac{\tilde{N}_\pm+a}{\tilde{N}_\pm - 2},	\quad p_\Delta^a=\frac{N+2+2a}{N-2},	\quad \alpha =\frac{2+a}{p-1}.}
\end{align}

\section{The dynamical system and exterior domain solutions}\label{section exterior}

Let $u,v$ be a positive solution pair of \eqref{P radial m} or \eqref{P radial m-}. Thus we can define the new functions
\begin{align}\label{X,Y,Z,W sem a}
x(t)=-\frac{ru^\prime}{u}, \qquad  z(t)={r^{2+a}\, u^{p-1}},  
\end{align}
for $t=\mathrm{ln}(r)$, whenever $r>0$ is such that $u> 0$.
The phase space is contained in $ \real^2$.
Throughout the text, we denote the first quadrant as
\begin{center}
	$1Q=\{\,(x,z)\in \real^2:\; x,z>0\,\}$.
\end{center}

Since we are studying positive solutions, the points $(x(t),z(t))$ belong to $1Q$ when $u^\prime<0$. Apart from $1Q$, we set
\begin{center}
	$2Q=\{\,(x,z)\in \real^2:\; x<0,\;\; z>0\,\}$,
\end{center}
that is, $2Q$ is the region in $\real^2$ such that the corresponding $u$ satisfies $u^\prime>0$. In particular, $u^{\prime\prime}<0$ in $2Q$.

As a consequence of this monotonicity, the problems \eqref{P radial m} and \eqref{P radial m-} become in $1Q$ as:
\begin{align}\label{P M+ 1Q}
\textrm{for $\M^+$ in $1Q$} :\quad
u^{\prime\prime}\;=\; M_+(-\lambda r^{-1}(N-1)\, u^\prime-r^a u^p ), \quad u> 0  ;
\end{align}
\vspace{-0.5cm}
\begin{align}\label{P M- 1Q}
\textrm{\;\;for $\M^-$ in $1Q$} :\quad 
u^{\prime\prime}\;=\; M_-(-\Lambda r^{-1}(N-1)\, u^\prime- r^a v^p ),  \quad u> 0  .
\end{align}

In terms of the functions \eqref{X,Y,Z,W sem a}, we derive the following autonomous dynamical system, corresponding to \eqref{P M+ 1Q} for $\M^+$, where the dot $\dot{ }$ stands for $\frac{\mathrm{d}}{\mathrm{d} t}$,
\begin{align}\label{DS+}
\textrm{$\M^+$ in $1Q$ : \quad}
\left\{
\begin{array}{ccl}
\dot{x} &=& \;x\, (x+1)-M_+ (\lambda(N-1)x-z )\, \smallskip \\
\dot{z} &=& \;z\,(x+2+a-px) .
\end{array}
\right.
\end{align}
Likewise one has for $\M^-$, associated to \eqref{P M- 1Q}, 
\begin{align}\label{DS-}
\textrm{$\M^-$ in $1Q$ : \quad}
\left\{
\begin{array}{ccl}
\dot{x} &=& \;x\, (x+1)-M_- (\Lambda(N-1)x-z )\,\smallskip  \\
\dot{z} &=& \;z\,(x+2+a-px) .
\end{array}
\right.
\end{align}
We stress that \eqref{DS+} and \eqref{DS-} correspond to positive, decreasing solutions of \eqref{P radial m} and \eqref{P radial m-}. 
\smallskip

On the other hand, given a trajectory $\tau=(x,z)$ of \eqref{DS+} or \eqref{DS-} in $1Q$, we define
\begin{align}\label{def u via X,Z}
\textstyle{ u(r)=r^{-\alpha}\, z^{\frac{1}{p-1}}}, \;\;\textrm{ where }\, r=e^t,
\end{align}
and then we deduce
\begin{align*}
u^\prime (r) \textstyle = -\alpha r^{-\alpha-1} \,z^{\frac{1}{p-1}}(t) 
	+ \frac{r^{-\alpha}}{p-1} \,z^{\frac{1}{p-1}-1}(t) \,\frac{\dot{z} }{r} 	
	= \frac{u}{r}\{ -\alpha +\frac{x+2+a-px}{p-1} \}=-\frac{x(t)u(r)}{r}.
\end{align*}
Since $x\in C^1$, then $u\in C^2$. Moreover, $u$ satisfies either \eqref{P radial m} or \eqref{P radial m-} from the respective equations for $\dot{x},\dot{z}$ in the dynamical system.

\smallskip	

In other words, $(x,z)$ is a solution of equation \eqref{DS+} or \eqref{DS-} in $1Q$ if and only if $u$ defined by \eqref{def u via X,Z} is a positive pair solution of \eqref{P radial m} or \eqref{P radial m-} with $u^{\prime}<0.$ 

\smallskip

An important role in the study of our problem is played by the following line for $\M^+$,
\begin{align}\label{pi M+}
\ell_+=\{\,(x,z): z=\lambda (N-1)x\,\}\cap 1Q,
\end{align}
which corresponds to the vanishing of $u^{\prime\prime}$, see also \cite{MNPscalar}. It allows us to define the following regions 
\begin{align}\label{R+- M+}
R^+_{\lambda}=\{(x,z)\in 1Q: z>\lambda (N-1)x\},\quad R^-_{\lambda}=\{(x,z)\in 1Q: z<\lambda (N-1)x\},
\end{align}
which represent the sets where the function $u$ is concave or convex.
More precisely, $R^+_{\lambda}$ is the region of strictly concavity of $u$, while $R^-_{\lambda}$ is the region of strictly convexity of $u$.

The respective notations for the operator $\M^-$ are 
\begin{align}\label{pi M-}
\ell_-=\{\,(x,z): z=\Lambda (N-1)x\,\}\cap 1Q,
\end{align}
\vspace{-0.8cm}
\begin{align}\label{R+- M-}
R^+_{\Lambda}=\{(x,z)\in 1Q: z>\Lambda (N-1)x\},\quad
R^-_{\Lambda}=\{(x,z)\in 1Q: z<\Lambda (N-1)x\}.
\end{align}

At this stage it is worth observing that the systems \eqref{DS+} and \eqref{DS-} are continuous on $\ell_+$ and $\ell_-$, respectively. More than that, the right hand sides are locally Lipschitz functions of $x,z$, so the usual ODE theory applies. 
Then one recovers existence, uniqueness, and continuity with respect to initial data as well as continuity with respect to the parameter $p$.

Since we are considering positive solutions of \eqref{P}, and $u^\prime>0$ implies $u^{\prime\prime}<0$, one finds out the following ODEs:
\begin{align}\label{P M+ 2Q}
\textrm{\;\;for $\M^+$ in $2Q$} :\quad \left\{\begin{array}{l}
\lambda u^{\prime\prime}\;=\; -\Lambda r^{-1}(N -1)u^\prime-r^a  u^p ,  \quad u> 0  ;\end{array}\right.
\end{align}\vspace{-0.5cm}
\begin{align}\label{P M- 2Q}
\textrm{\;\;for $\M^-$ in $2Q$} :\quad \left\{\begin{array}{l}
\Lambda u^{\prime\prime}\;=\; -\lambda r^{-1}(N -1)u^\prime- r^a u^p , \quad u> 0; \end{array}\right.
\end{align}
Now, in terms of the corresponding dynamical system, we get 
\begin{align}\label{DS+ 2Q}\textrm{$\M^+$ in $2Q$\,:\;\;}
\left\{\begin{array}{cc}
\dot{x} = \;x \, (x-\tilde{N}_- +2) \,+\frac{z}{\lambda} , \smallskip \\ \dot{z}= \;z\, (\, x+2+a-px\,). \;
\end{array}\right.
\end{align}

On the other hand, for the operator $\M^-$ one has
\begin{align}\label{DS- 2Q}\textrm{$\M^-$ in $2Q$\,:\;\;}
\left\{\begin{array}{cc}
\dot{x} = \;x \, (x-\tilde{N}_+ +2) \,+\frac{z}{\Lambda} , \smallskip \\ \dot{z}= \;z\, (\, x+2+a-px\,). \;
\end{array}\right.
\end{align}

\begin{rmk}
No stationary points exist in $2Q$. Indeed, since $u^\prime >0$ and $u>0$ yield $u^{\prime\prime}<0$ then $\dot{x}>0$ in $2Q$. In other words, we do not have stationary points outside $1Q$ when we are considering positive solutions $u$ of \eqref{P}.
\end{rmk}

One may write the dynamical systems \eqref{DS+} and \eqref{DS-} in terms of the following ODE first order autonomous equation
\begin{align}\label{Prob x=(X,Z)}
(\dot{x}, \dot{z})=(\, f(x,z) , \, g(x,z)\, ). 
\end{align}
For instance, in the case of the operator $\M^+$, then $f,g$ are given by
\begin{center}
	$f(x,z)=
	\begin{cases}
x\, (x-N+2)+\frac{z}{\lambda} \;\textrm{ in } R^+_\lambda\\
x\, (x-\tilde{N}_+ +2)+\frac{z}{\Lambda}\;\textrm{ in } R^-_\lambda
	\end{cases}
	,\quad
	g(x,z)=z\,(x+2+a-px)$.
\end{center}

We first recall some standard definitions from the theory of dynamical systems.

A stationary point $Q$ of \eqref{Prob x=(X,Z)} is a zero of the vector field $(f,g)$.
	If $\sigma_1$ and $\sigma_2$ are the eigenvalues of the Jacobian matrix $(Df(Q),Dg(Q))$, then $Q$ is hyperbolic if both $\sigma_1, \sigma_2$ have nonzero real parts. If this is the case, $Q$ is a \textit{source} if $\mathrm{Re}(\sigma_1),\mathrm{Re}(\sigma_2)>0$, and a \textit{sink} if $\mathrm{Re}(\sigma_1),\mathrm{Re}(\sigma_2)<0$; $Q$ is a \textit{saddle point} if $\mathrm{Re}(\sigma_1)<0<\mathrm{Re}(\sigma_2)$.

Next we recall a result on the local stable and unstable manifolds near saddle points of the system \eqref{Prob x=(X,Z)}; see \cite[theorems 9.29, 9.35]{Hale}.
We will see that the usual theory for autonomous planar systems applies since each stationary point $Q$ possesses a neighborhood strictly contained in $R^-_\lambda$ in the first quadrant where the function $f$ is $C^1$. In turn, $g$ is always a $C^1$ function.

Sometimes we denote $\alpha(\tau)$, i.e.\ the $\alpha$-limit of the orbit $\tau$, as the set of limit points of $\tau(t)$ as $t\to -\infty$.
	Similarly one defines $\omega(\tau)$ i.e.\ the $\omega$-limit of $\tau$ at $+\infty$.

We observe that the $x$ axis is invariant by the flow in the sense that $z=0$ implies $\dot z=0$.
Next, the set where $\dot{x}=0$ for the system \eqref{DS+}, with respect to the operator $\M^+$, is given by the parabola
\begin{align}\label{ell 1+}
\pi_1^+ = \{(x,z):\, z = \Lambda (\tilde{N}_+-2)x-\Lambda x^2\}\cap 1Q.
\end{align}
Note that a respective parabola $\{\,(x,z):\, z = \lambda (N-2)x-\lambda x^2\,\}$ where $\dot x =0 $ on the region $R_\lambda^+$ does not exist, since it lies entirely below the concavity line $\ell_+$, namely
\begin{center}
$z = \lambda (N-2)x-\lambda x^2 <  \lambda (N-2)x <\lambda (N-1)x$\;\; for $x>0$.
\end{center}
Moreover , the parabola $\pi_1^+$ itself in \eqref{ell 1+} lies below the concavity line $\ell_+$, so belonging to the region $R_\lambda^-$, that is,
\begin{center}
	$z = \Lambda (\tilde{N}_+-2)x-\lambda x^2 <  \Lambda (\tilde{N}_+-2)x <\lambda (N-1)x$\;\; whenever $x>0$,
\end{center}
since $\tilde{N}_+-2<\tilde{N}_+-1=\frac{\lambda}{\Lambda} (N-1)$.
Analogously, for the operator $\M^-$, the parabola
\begin{align}\label{ell 1-}
\pi_1^- = \{\,(x,z):\, z = \lambda (\tilde{N}_--2)x-\lambda x^2\,\}\cap 1Q
\end{align}
represents the set where $\dot{x}=0$, which is contained in the region $R_\Lambda^-$. 
Also, we define the line
\begin{align}\label{ell 2+}
\pi_2=\pi_2^\pm = \{\, (x,z):\, x=\alpha\, \}\cap 1Q,
\end{align}
which is the set where $\dot{z}=0$ and $z>0$ for both operators $\M^\pm$.

\begin{lem}\label{stationary M+}
	The stationary points of the dynamical systems \eqref{DS+}, \eqref{DS-}, \eqref{DS+ 2Q}, and \eqref{DS- 2Q} are:
	\begin{center}
		for $\M^+$:\quad	$O=(0,0)$,  \quad $A_0=(\tilde{N}_+-2,0)$, \quad $M_0=(x_0,z_0)$,
	\end{center}
	where $x_0 = \alpha$, and $z_0 =\alpha\Lambda(\tilde{N}_+-p\alpha+a) = \alpha\Lambda(\tilde{N}_+-2 - \alpha)$;
	\begin{center}
		for $\M^-$:\quad	$O=(0,0)$, \quad $A_0=(\tilde{N}_--2,0)$, \quad $M_0=(x_0,z_0)$,
	\end{center}
	where $x_0 = \alpha$ and $z_0 = \alpha \lambda(\tilde{N}_--p\alpha+a) = \alpha\lambda(\tilde{N}_--2 - \alpha)$.
\end{lem}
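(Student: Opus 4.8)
The plan is to solve $\dot x=0$, $\dot z=0$ directly for each of the four systems, using the fact that in all of them the $z$-equation is identical: $\dot z = z\bigl((1-p)x+2+a\bigr)$. Since $p>1$, this factor vanishes if and only if $z=0$ or $x=\tfrac{2+a}{p-1}=\alpha$, so every stationary point lies on the $x$-axis or on the vertical line $\pi_2=\{x=\alpha\}$, and it remains only to intersect each of these with the zero set of $\dot x$. I would carry out the $\M^+$ case in full and recover the $\M^-$ case by the obvious transcription $\lambda\leftrightarrow\Lambda$, $\tilde N_+\leftrightarrow\tilde N_-$ between \eqref{DS+}, \eqref{DS+ 2Q} and \eqref{DS-}, \eqref{DS- 2Q}.

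On the $x$-axis ($z=0$): for $x\ge 0$ one has $z=0\le\lambda(N-1)x$, so the relevant branch of $\dot x$ is the one valid on $\overline{R^-_\lambda}$ (the vector field being continuous across $\ell_+$), namely $\dot x=x\,(x-\tilde N_++2)$; using the standing hypothesis $\tilde N_+>2$ this vanishes exactly at $x=0$ and $x=\tilde N_+-2$, giving $O=(0,0)$ and $A_0=(\tilde N_+-2,0)$. For $x<0$ one uses \eqref{DS+ 2Q}, where $\dot x=x\,(x-\tilde N_-+2)$ with $\tilde N_-=\tfrac{\Lambda}{\lambda}(N-1)+1\ge N>2$; this is a product of two strictly negative factors, hence positive, so there is no equilibrium with $x<0$ (which also re-proves the earlier remark that $2Q$ has none). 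On the line $x=\alpha$ (where necessarily $z\ge 0$): on $\ell_+\cap\pi_2$ the vector field gives $\dot x=\alpha(\alpha+1)\ne 0$; inside $R^+_\lambda$, $\dot x=\alpha(\alpha-N+2)+z/\lambda=0$ would force $z=\lambda\alpha(N-2-\alpha)\le\lambda\alpha(N-2)<\lambda(N-1)\alpha$, contradicting $z>\lambda(N-1)\alpha$, so no equilibrium lies there; inside $R^-_\lambda$, $\dot x=\alpha(\alpha-\tilde N_++2)+z/\Lambda=0$ gives $z=z_0:=\Lambda\alpha(\tilde N_+-2-\alpha)$, consistent with $R^-_\lambda$ since $z_0<\Lambda\alpha(\tilde N_+-2)<\Lambda\alpha(\tilde N_+-1)=\lambda(N-1)\alpha$. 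The identity $(p-1)\alpha=2+a$ rewrites $z_0=\alpha\Lambda(\tilde N_+-p\alpha+a)$, which is the stated value of $M_0=(x_0,z_0)$; note that $M_0\in 1Q$ precisely when $\alpha<\tilde N_+-2$, and $M_0=A_0$ in the borderline case $\alpha=\tilde N_+-2$.

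The $\M^-$ computation is word-for-word the same after swapping the roles of $\lambda$ and $\Lambda$ (so the $R^-$-branch of $\dot x$ reads $x(x-\tilde N_-+2)+z/\lambda$ and the $R^+$-branch reads $x(x-N+2)+z/\Lambda$), producing $O=(0,0)$, $A_0=(\tilde N_--2,0)$ and $M_0=(\alpha,\alpha\lambda(\tilde N_--2-\alpha))=(\alpha,\alpha\lambda(\tilde N_--p\alpha+a))$. I do not expect any genuine obstacle: the only points that need care are (i) selecting the correct piecewise branch of $\dot x$ on the $x$-axis and across $\ell_\pm$, and (ii) the self-consistency check that the candidate $M_0$ actually lies in the region $R^-_\lambda$ (resp.\ $R^-_\Lambda$) whose formula produced it, which is exactly the inequality $\tilde N_\pm-2<\tilde N_\pm-1$ together with $\tilde N_+-1=\tfrac{\lambda}{\Lambda}(N-1)$ (resp.\ $\tilde N_--1=\tfrac{\Lambda}{\lambda}(N-1)$) already recorded in the text.
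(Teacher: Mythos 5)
Your proposal is correct and follows essentially the same route as the paper: the paper's one-line proof identifies the stationary points as the intersections of the parabola $\pi_1^\pm$ (the zero set of $\dot x$, which lies in $R^-_\lambda$, resp.\ $R^-_\Lambda$) with the line $\pi_2$ and the axis $\{z=0\}$, and your argument is simply a careful branch-by-branch verification of that same computation, including the consistency check that the candidate $M_0$ indeed falls in the region whose formula produced it and that no equilibria arise in $R^+_\lambda$ or in $2Q$.
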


\begin{proof} 
	The stationary points are given by the intersection of the parabola $\pi_1^\pm$ with the lines $\pi_2$ and $\{(x,z)\in \real^2: z=0\}$.
\end{proof}

Next we analyze the directions of the vector field $F$ in \eqref{Prob x=(X,Z)} on the $x,z$ axes, on the concavity lines
$\ell_\pm$, and on the sets $\pi_1^\pm$ and $\pi_2$.

\begin{prop}\label{prop flow} The systems \eqref{DS+} and \eqref{DS-} enjoy the following properties:	
	\begin{enumerate}[(1)]
		\item Every trajectory of \eqref{DS+} in $1Q$ crosses the line $\ell_+$ transversely except at the point
		$P=\frac{1+a}{p}(1, \lambda (N-1))$. 
		It passes from 
		$R^+_\lambda$ to $R^-_\lambda$  if $x> \frac{1+a}{p}$, while it moves from $R^-_\lambda$ to $R^+_\lambda$ if $x< \frac{1+a}{p}$.
A similar statement holds for \eqref{DS-}, via $\ell_-$, $P=\frac{1+a}{p}(1, \Lambda (N-1))$, $R^+_\Lambda$, $R^-_\Lambda$;
		
		\item The vector field at the point $P$ in item (1) is parallel to the line $\ell_+$ (resp.\ $\ell_-$), and  an orbit can only reach such a point from $R^-_\lambda$ (resp.\  $R^-_\Lambda$);
		
		\item The flow induced by \eqref{Prob x=(X,Z)} on the $x$ axis points to the left for $x\in (0, \tilde{N}_\pm-2)$, and to the right when $x> \tilde{N}_\pm-2$. On the $z$ axis it always moves up and to the right;
		
		\item The vector field $(f,g)$ on the parabola $\pi_1^\pm$ is parallel to the $z$ axis whenever $x\neq \alpha$. It points up if $x< \alpha$, and down if $x>\alpha$. 
		
		\item On the line $\pi_2$ the vector field $(f,g)$ is parallel to the $x$ axis for $z\neq z_0$, where $z_0$ is the $z$-coordinate of $M_0$ in Lemma \ref{stationary M+}. It moves to the left if $z< z_0$, and to the right if $z>z_0$.
	\end{enumerate}
	
\end{prop}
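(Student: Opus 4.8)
The plan is to verify each assertion by substituting the defining equation of the relevant curve into the vector field $(f,g)=(\dot x,\dot z)$ and reading off signs; I would carry this out for \eqref{DS+}, the statements for \eqref{DS-} being identical under $\lambda\leftrightarrow\Lambda$, $\tilde N_+\leftrightarrow\tilde N_-$, $\ell_+\to\ell_-$, $R^\pm_\lambda\to R^\pm_\Lambda$. Three facts get used repeatedly. First, on $\ell_+$ one has $M_+(\lambda(N-1)x-z)=M_+(0)=0$, so there $\dot x=x(x+1)$ and $\dot z=z\,(2+a-(p-1)x)=\lambda(N-1)x\,(2+a-(p-1)x)$. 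Second, the ray $\{z=0,\ x>0\}$ is flow-invariant and sits in $\overline{R^-_\lambda}$, so on it $\dot z=0$ and (using the $R^-_\lambda$ formula) $\dot x=x(x-\tilde N_++2)$. Third, throughout $R^+_\lambda\cap 1Q$ one has $\dot x>0$: indeed $z>\lambda(N-1)x>\lambda(N-2)x>\lambda\big((N-2)x-x^2\big)$ gives $x(x-N+2)+z/\lambda>0$, which is also why no $\dot x$-nullcline lives in $R^+_\lambda$.

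Given these, items (3)--(5) are direct computations. On the $z$ axis ($x=0$, $z>0$, a point of $R^+_\lambda$) the field is $(z/\lambda,\,(2+a)z)$, pointing up and to the right since $a>-1$; on the positive $x$ axis, $\dot x=x(x-\tilde N_++2)$ is $<0$ on $(0,\tilde N_+-2)$ and $>0$ beyond. On $\pi_1^+$ one has $\dot x=0$ by definition of that nullcline (which lies in $R^-_\lambda$), while $\dot z=z(p-1)(\alpha-x)$, so the field is vertical for $x\neq\alpha$, upward for $x<\alpha$ and downward for $x>\alpha$. On $\pi_2=\{x=\alpha\}\cap 1Q$ one has $\dot z=z\,(2+a-(p-1)\alpha)=0$, and in $R^-_\lambda$ the $\dot x$ equation becomes $\alpha(\alpha-\tilde N_++2)+z/\Lambda=(z-z_0)/\Lambda$; since $z_0<\lambda(N-1)\alpha$ (equivalently $-1>-2-\alpha$), the portion of $\pi_2$ below $\ell_+$ already carries the sign change at $z=z_0$, and above $\ell_+$ one invokes the third fact, yielding the left/right dichotomy about $z_0$ and, en passant, that $M_0\in R^-_\lambda$.

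For items (1)--(2) I would study $\ell_+$ via $h:=z-\lambda(N-1)x$, which is $>0$ on $R^+_\lambda$ and $<0$ on $R^-_\lambda$. Using the first fact,
\[
\dot h=\dot z-\lambda(N-1)\dot x=\lambda(N-1)x\,(1+a-px)=p\,\lambda(N-1)x\Big(\tfrac{1+a}{p}-x\Big)\quad\text{on }\ell_+.
\]
Since $\dot x=x(x+1)>0$ on $\ell_+\cap 1Q$, the field is transverse to $\ell_+$ precisely where $\dot h\neq0$, i.e.\ everywhere except at $x=\tfrac{1+a}{p}$, which is $P=\tfrac{1+a}{p}(1,\lambda(N-1))$; the sign of $\dot h$ gives the crossing direction (from $R^-_\lambda$ into $R^+_\lambda$ if $x<\tfrac{1+a}{p}$, the reverse if $x>\tfrac{1+a}{p}$), and at $P$, where $\dot h=0$ and $\dot x>0$, the field is parallel to $\ell_+$ and points toward increasing $x$. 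This settles (1) and the first half of (2). For the remaining claim in (2), note $P$ is not a stationary point (it lies off $\pi_2$ and off $\{z=0\}$), so an orbit meeting $P$ passes through it; as $\dot x>0$ on a neighbourhood of $P$, the coordinate $x$ is strictly increasing along the orbit there, so just before $P$ the orbit is at a point with $x<x_P$. Were that point in $\overline{R^+_\lambda}$, then on its way to $P$ the orbit would cross $\ell_+$ at some $x<x_P$, where $\dot h>0$ forces a crossing from $R^-_\lambda$ into $R^+_\lambda$, never the opposite --- a contradiction; hence the orbit reaches $P$ from $R^-_\lambda$.

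Essentially everything here is a one-line substitution, so I do not expect a genuine obstacle; the only place requiring care is the last assertion of (2), where tangency at $P$ rules out reading a crossing direction and one must argue about which side orbits approach --- handled above through the sign of $\dot h$ near $P$ (a second-order Taylor expansion of $h$ along the trajectory would do as well). One should also remember that $f$ is only piecewise $C^1$, its formula switching across $\ell_\pm$, so near $P$ --- and in computing $\dot x$ on $\pi_1^+$, $\pi_2$, the $x$ axis, and at $M_0$ --- the $R^-_\lambda$ expression is the relevant one, consistent with all those objects lying in $\overline{R^-_\lambda}$.
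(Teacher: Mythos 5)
Your treatment of items (1), (3), (4), (5) and of the tangency of the field at $P$ is correct and is essentially the paper's own computation: the paper compares $\mathrm{d}z/\mathrm{d}x=\dot z/\dot x$ on $\ell_+$ with the slope $\lambda(N-1)$, which is exactly your sign analysis of $\dot h$ on $\ell_+$ (and your handling of $\pi_2$ above $\ell_+$, noting $z_0<\lambda(N-1)\alpha$, is if anything slightly more explicit than the paper's). The gap is in the last claim of item (2). Your contradiction argument assumes that an orbit sitting in $\overline{R^+_\lambda}$ shortly before reaching $P$ must cross $\ell_+$ transversely at some abscissa $x<x_P$. But the scenario that has to be excluded is precisely an orbit that stays strictly inside $R^+_\lambda$ (where $h=z-\lambda(N-1)x>0$) on a whole left time-interval and meets $\ell_+$ for the first time exactly at $P$, tangentially; for such an orbit there is no crossing at any $x<x_P$, so item (1) yields no contradiction. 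Nor does ``the sign of $\dot h$ near $P$'' settle it: you only determined the sign of $\dot h$ on the line $\ell_+$ itself. Off the line, in $R^+_\lambda$ near $P$, the relevant branch gives $\dot h=\lambda(N-1)x\,p\,(x_P-x)+h\,\bigl((1-p)x+3+a-N\bigr)$, and the second term can be negative (e.g.\ for large $N$) and dominate when $h$ is large compared with $x_P-x$; hence $h$ may well be decreasing along hypothetical orbits in $R^+_\lambda$ approaching $P$, and no first-order sign argument on $\ell_+$ rules them out.

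What is needed, and what the paper does, is a genuinely local analysis of the unique orbit through $P$: since $\dot x>0$ near $P$, parametrize it by $x$, set $\tilde z(x)=z(x)-\lambda(N-1)x$, note $\tilde z(x_P)=\tilde z'(x_P)=0$, and show that $x_P$ is a maximum point of $\tilde z$, so the orbit lies in $R^-_\lambda$ on both sides of $P$. Concretely, one can rule out the tangential approach from $\overline{R^+_\lambda}$ as follows: your crossing analysis shows that if the orbit is in $\overline{R^+_\lambda}$ at some $x_1<x_P$ close to $x_P$, it must satisfy $\tilde z\ge 0$ on all of $[x_1,x_P]$; then, since on that set $\tilde z'(x)\ge \lambda(N-1)x\,p\,(x_P-x)/f - K\tilde z$ for a suitable constant $K$ (with $f>0$ bounded near $P$), the function $e^{Kx}\tilde z$ is strictly increasing up to $x_P$, and since $\tilde z(x_P)=0$ this forces $\tilde z<0$ on $[x_1,x_P)$ --- a contradiction. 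Your parenthetical remark that a second-order Taylor expansion of $h$ along the trajectory ``would do as well'' points at the right missing step, but it is the step itself, not an alternative to an argument you have already given; as written, the proof of the second assertion of (2) is incomplete.
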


\begin{proof} Let us consider the operator $\M^+$ since for $\M^-$ it will be analogous.
	
\textit{(1)--(2)} We have $\dot{x}>0$ on $\ell_+$ since $\pi_1^+$ is below the line $\ell_+$. 
By the inverse function theorem, $t$ is a function of $x$ on $\ell_+$, and so is $z$. 
In order to detect the transversality, one looks at $\frac{\mathrm{d}z}{\mathrm{d}x}$ on $\ell_+$ and compare it with the slope of $\ell_+$. 
Note that
\begin{align}\label{Z(X)}
\textstyle \frac{\mathrm{d}z}{\mathrm{d}x} = \frac{\dot{z}}{\dot{x}} =  \frac{\lambda (N-1)(p-1)x(\alpha-x)}{x(x+1)} \quad \textrm{on\,\; $\ell_+$}.
\end{align}
Then $\frac{\mathrm{d}z}{\mathrm{d}x}>\lambda (N-1)$ for $x<\frac{1+a}{p}$, while $\frac{\mathrm{d}z}{\mathrm{d}x}<\lambda (N-1)$ for $x>\frac{1+a}{p}$.\smallskip

Now we infer that a change of concavity does not happen at $x=\frac{1+a}{p}$. Indeed, if $\tau=(x,z)$ is a trajectory and
$t_0$ is such that $\tau(t_0)=P$, then $\frac{\mathrm{d}z}{\mathrm{d}x}(\frac{1+a}{p})=\lambda (N-1)$ i.e. $\frac{\mathrm{d}\tilde{z}}{\mathrm{d}x}(\frac{1+a}{p})=0$, where $\tilde{z}(x)=z(x)-\lambda(N-1)x$.
Then $\tilde{z}$ has a maximum point at $x=\frac{1+a}{p}$, and so $\tau$ stays in $R^-_\lambda$ in a neighborhood of the point $P$.
	
\textit{(3)}  Since the $x$ axis is contained in $R^-_\lambda$ then $\dot{x}=x(x-(\tilde{N}_+-2))$ which is positive for $x>\tilde{N}_+-2$ and negative when $x<\tilde{N}_+-2.$
On the other hand, the $z$ axis is contained in $R^+_\lambda$, so $\dot x = \frac{z}{\lambda}>0$ and $\dot{z}= z(2+a)>0$ on $x=0$ for $a>-2$.
	
\textit{(4)}
	$\dot{z}=(p-1)z(\alpha-x)$ on $\pi_1^+$ is positive for $x<\alpha$, and negative when $x> \alpha$. 

\textit{(5)} On $\pi_2\cap R_\lambda^-$, $\dot{x} =\alpha (\alpha - (\tilde{N}_+-2))+\frac{z}{\Lambda}$  is positive for $z>z_0$ and negative when $z<z_0$,  for $z_0$ given in Lemma \ref{stationary M+}. On $\pi_2 \cap  (R_\lambda^+\cup \ell_+)$ we have $\dot{x}$$ \ge \alpha (\alpha +1)>0$. 
\end{proof}
\vspace{-0.4cm}
\begin{figure}[!htb]\centering	\includegraphics[scale=0.7]{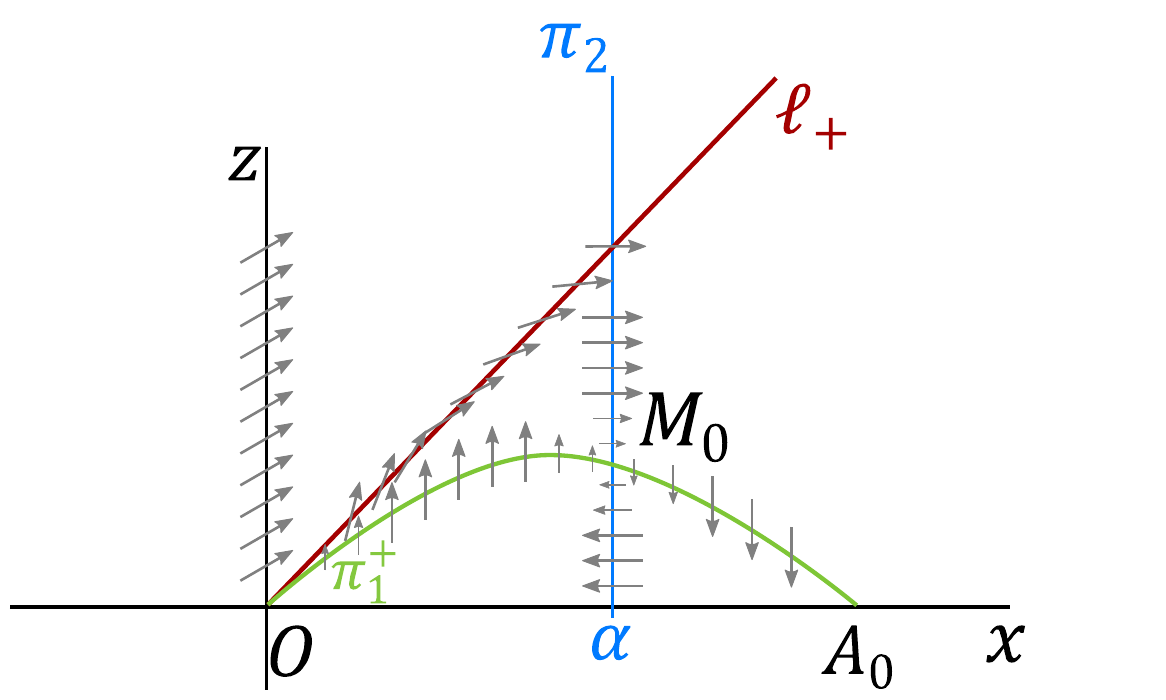}
	\caption{The phase plane $(x,z)$ when $p>p^{s,a}_+$.}
	\label{Fig flow}
\end{figure}

The next proposition gives us a study on a local analysis of the stationary points. It follows by the correspondence with the variables $(X,Z)$ in \cite{MNPscalar}, and Propositions 2.2, 2.7, and 2.8, in addition to the appendix in there. 

We recall that $X:=x$ and $Z(t):=-{r^{1+a}u^p(r)}/{u^\prime(r)}$, $r=e^t$ from \cite{MNPscalar}. Note that $z=XZ$. 
\begin{prop}[$\M^\pm$]\label{local study stationary points}
The following properties are verified for the systems \eqref{DS+} and \eqref{DS-}.
\begin{enumerate}
\item For every $p>1$ the origin $O$ is a saddle point, whose unstable direction is given by 
\begin{align*}
\textrm{$z=\Lambda (\tilde{N}_++a)x $\; if the operator is $\M^+$,\quad 
$z= \lambda (\tilde{N}_- +a)x$\; for $\M^-$.}
\end{align*}
Moreover, there is a unique trajectory coming out from $O$ at $-\infty$ with slope as above, which we denote by $\Gamma_p$\,, that is,
for all $p>1$, $\Gamma_p$ is such that  $\alpha(\Gamma_p)=O$.
		
\item For $p>p^{s,a}_\pm$ the point $A_0 $ is a saddle point whose linear stable direction is 
\begin{align*}
z=-A_\pm \,x +A_\pm (\tilde N_\pm -2) ,
\end{align*}
where $A_+ =\Lambda [(\tilde{N}_+-2) p - (2+a)]$ for $\M^+$, while $A_- =\lambda [(\tilde{N}_--2) p - (2+a)]$ for $\M^-$.
Further, there exists a unique trajectory arriving at $A_0$ at $+\infty$ with slope as above, which we denote by $\Upsilon_{p}$ i.e.\
for all $p>p^{s,a}_\pm$,   $\Upsilon_p$ is such that\quad $\omega(\Upsilon_p)=A_0$.

\item
At $p=p^{s,a}_\pm$ the point  $A_0$ coincides with $M_0$ and belongs to the $x$ axis, while for $p< p_\pm^{s,a}$ the point $A_0$ is a source and $M_0$ belongs to the fourth quadrant. Also,  $M_0\in 1Q \,\Leftrightarrow\, p>p^{s,a}_\pm$ in which case: $M_0 $ is a source if $p_\pm^{s,a}<p<p_\pm^{p,a}$; $M_0$ is a sink for $p>p_\pm^{p,a}$;
$M_0$ is a center at $p=p^{p,a}_\pm$.
\end{enumerate}
\end{prop}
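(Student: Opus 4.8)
My plan is to treat Proposition~\ref{local study stationary points} as a phase-plane linearisation at the three equilibria of Lemma~\ref{stationary M+}, carried out directly in the $(x,z)$ variables, and to invoke the correspondence $z=XZ$ with \cite{MNPscalar} only for the uniformity and the continuity-in-$p$ statements. The points $A_0$ and $M_0$ (the latter when it lies in $1Q$) sit in $R^-_\lambda$ for $\M^+$, resp.\ $R^-_\Lambda$ for $\M^-$ --- recall $\pi_1^\pm\subset R^-_\lambda$ and $A_0\in\{z=0\}$ --- so near either of them the field $f$ coincides with the $C^1$ function $x(x-\tilde N_+ +2)+z/\Lambda$ (resp.\ $x(x-\tilde N_- +2)+z/\lambda$), and the standard tools apply: Hartman--Grobman for the hyperbolic classification and the local stable/unstable manifold theorem \cite[Thms.~9.29, 9.35]{Hale} for the invariant manifolds. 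The origin needs a preliminary remark, since $(f,g)$ is only Lipschitz across the concavity line $\ell_\pm$, which accumulates at $O$: by Proposition~\ref{prop flow}(1)--(2) a trajectory lying near $O$ crosses $\ell_\pm$ transversally, and since $x<\tfrac{1+a}{p}$ there (the $x$-coordinate of the exceptional point $P$), always in the same sense; hence an orbit with $\alpha$-limit $O$ meets $\ell_\pm$ at most once near $O$ and is governed, in a punctured neighbourhood of $O$, by a single one-sided $C^1$ extension of $(f,g)$, to which the manifold theorem again applies.

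For the classifications the Jacobians are essentially triangular. At $O$ the Jacobian of the relevant extension is upper triangular with one negative diagonal entry and the other equal to $2+a>0$, so $O$ is a saddle for every $p>1$; the unstable eigenvector gives the announced direction, and the unstable-manifold theorem yields the unique branch $\Gamma_p$ entering $1Q$ with $\alpha(\Gamma_p)=O$. At $A_0=(\tilde N_\pm-2,0)$ the Jacobian is upper triangular with diagonal entries $\tilde N_\pm-2>0$ and $(2+a)-(p-1)(\tilde N_\pm-2)$, the latter being positive exactly for $p<p^{s,a}_\pm$ and negative for $p>p^{s,a}_\pm$; so $A_0$ is a source in the first regime and a saddle in the second, in which case the stable eigenvector has slope $-A_\pm$ with $A_+=\Lambda[(\tilde N_+-2)p-(2+a)]$, $A_-=\lambda[(\tilde N_--2)p-(2+a)]$, and the manifold theorem gives the unique $\Upsilon_p$ with $\omega(\Upsilon_p)=A_0$. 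For $M_0=(\alpha,z_0)$, $z_0=\alpha\Lambda(\tilde N_+-2-\alpha)$ (resp.\ $\alpha\lambda(\tilde N_--2-\alpha)$), I use that $\alpha=\tfrac{2+a}{p-1}>0$ is decreasing in $p$: then $z_0>0\Leftrightarrow\alpha<\tilde N_\pm-2\Leftrightarrow p>p^{s,a}_\pm$, at $p=p^{s,a}_\pm$ one has $\alpha=\tilde N_\pm-2$ so $M_0=A_0\in\{z=0\}$, and for $p<p^{s,a}_\pm$ one gets $z_0<0$, i.e.\ $M_0$ in the fourth quadrant. When $p>p^{s,a}_\pm$, the identity $(1-p)\alpha+(2+a)=0$ makes the $(2,2)$-entry $\partial_z g(M_0)$ vanish, whence $\det=(p-1)\,\alpha(\tilde N_\pm-2-\alpha)>0$ and $\operatorname{tr}=2\alpha-(\tilde N_\pm-2)$; the two eigenvalues thus have the same sign of real part, dictated by the trace, and $\operatorname{tr}>0\Leftrightarrow\alpha>\tfrac{\tilde N_\pm-2}{2}\Leftrightarrow p<p^{p,a}_\pm$. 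Hence $M_0$ is a source for $p^{s,a}_\pm<p<p^{p,a}_\pm$, a sink for $p>p^{p,a}_\pm$, and at $p=p^{p,a}_\pm$ the linearisation is a center.

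The one genuine obstacle is the origin: one must justify that the smooth stable-manifold machinery may be used there despite the non-differentiability of $(f,g)$ on $\ell_\pm$ --- that is, pin down on which side of $\ell_\pm$ the branch emanating from $O$ lies --- which is exactly what the transversality in Proposition~\ref{prop flow}(1)--(2) provides. A milder point is the claim at $p=p^{p,a}_\pm$: a purely imaginary linearised spectrum does not by itself force $M_0$ to be a nonlinear center, so there one borrows the finer analysis (a first integral, or a normal-form computation) from \cite{MNPscalar}. With these settled, the existence, uniqueness and continuous dependence on $p$ of $\Gamma_p$ and $\Upsilon_p$, and the coincidence of the thresholds with the critical exponents $p^{s,a}_\pm$, $p^{p,a}_\pm$, follow either directly or by transporting \cite[Props.~2.2, 2.7, 2.8 and Appendix]{MNPscalar} through $z=XZ$.
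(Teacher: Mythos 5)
Your linearisations at $A_0$ and $M_0$ are sound: a full neighbourhood of each point lies in $R^-_\lambda$ (resp.\ $R^-_\Lambda$), so a single $C^1$ extension governs there, and your Jacobian/trace--determinant computations reproduce the thresholds $p^{s,a}_\pm$, $p^{p,a}_\pm$ correctly (deferring the nonlinear-center claim at $p=p^{p,a}_\pm$ to \cite{MNPscalar} is legitimate, and is in fact all the paper does: its proof of this proposition is not a direct linearisation but the transfer of Propositions 2.2, 2.7, 2.8 and the appendix of \cite{MNPscalar} through the correspondence $z=XZ$). So your route is genuinely different from the paper's, and for items (2)--(3) it works, up to the harmless omission of the factor $1/\Lambda$ (resp.\ $1/\lambda$) in the determinant at $M_0$.

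The gap is exactly at the point you yourself call ``the one genuine obstacle'', the origin, and your argument does not close it: you never determine on which side of $\ell_\pm$ the branch emanating from $O$ lies, and you simply assert that ``the unstable eigenvector gives the announced direction''. But the two one-sided Jacobians at $O$ have \emph{different} unstable eigendirections: for $\mathcal{M}^+$ the $R^+_\lambda$ extension $x(x-N+2)+z/\lambda$ gives slope $\lambda(N+a)$, while the $R^-_\lambda$ extension $x(x-\tilde N_+ +2)+z/\Lambda$ gives the announced slope $\Lambda(\tilde N_+ +a)$. Since $\Lambda(\tilde N_+ +a)>\Lambda(\tilde N_+ -1)=\lambda(N-1)$, the announced direction lies strictly above $\ell_+$, i.e.\ outside the region where the $R^-_\lambda$ extension is in force, so no orbit can leave $O$ tangent to it; by contrast the $R^+_\lambda$ eigendirection lies inside its own sector, and the unstable-manifold theorem applied to that extension is what produces the unique branch $\Gamma_p$, tangent to $z=\lambda(N+a)x$ (resp.\ $z=\Lambda(N+a)x$ for $\mathcal{M}^-$). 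This is consistent with $\Gamma_p$ being the trajectory of regular solutions, which are concave near $r=0$ and satisfy $Z=-r^{1+a}u^p/u'\to\lambda(N+a)$, the same constant that appears in Proposition \ref{AP bounds}. Hence carrying out your own strategy carefully contradicts the slope you claim to have verified: either the displayed direction in item (1) is a slip (it is the eigendirection of the one-sided linearisation valid on the \emph{other} side of $\ell_\pm$), in which case your write-up should derive and flag the corrected value rather than endorse the formula, or, as written, your proof of item (1) fails at precisely its one delicate step.
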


The trajectories $\Gamma_p$ and $\Upsilon_p$ uniquely determine the global unstable and stable manifolds of the stationary points $O$ and $A_0$, respectively.  They are graphs of functions in a neighborhood of the stationary points in their respective ranges of $p$.
The tangent direction at $O$ is always above the parabola $\pi_1^\pm$ with $\dot z >0$, while the tangent direction at $A_0$ is above $\pi_1^\pm$ with $\dot z <0$ for all $p> p^{s,a}_\pm$.

\medskip

Next we translate the results obtained in \cite{MNPscalar} into the new variables in what concerns periodic orbits, a priori bounds and blow-ups. We use the  one to one correspondence between the orbits in the system $(X,Z)$ in \cite{MNPscalar} and our $(x,z)$. 

Since $\dot x>0$ on $2Q$ then periodic orbits may only exist in $1Q$. From this, we automatically recover the following proposition from \cite{MNPscalar}.
Recall that $p^{p,a}_-<p_\Delta^a < p^{p,a}_+$ from \eqref{critical exponents a}.

\begin{prop}[Dulac's criterion]\label{Dulac}
	Let $\lambda<\Lambda$. 
In the case of the operator $\M^+$ there are no periodic orbits of \eqref{DS+} when $1<p\leq  p_\Delta^a$ or $p> p^{p,a}_+$. In the case of $\M^-$ no periodic orbits of \eqref{DS-} exist if $1<p<p^{p,a}_-$ or $p\geq p_\Delta^a$. In addition, for $\M^+$
,	\vspace{-0.2cm}
	\begin{enumerate}[(i)]
		\item there are no periodic orbits strictly contained in the region $R^+_\lambda\cup \ell_+$ $($resp.\ $R^+_\Lambda\cup \ell_-$ for $\M^-)$, for any $p>1$;
		\vspace{-0.2cm}
		\item periodic orbits contained in $R^-_\lambda\cup \ell_+$ $($resp.\ $R^-_\Lambda\cup \ell_-)$ are admissible only at $p=p^{p,a}_\pm$. 
		Also, no periodic orbits at $p^{p,a}_\pm$ can cross the concavity line $\ell_\pm$ twice;
		\vspace{-0.2cm}
		\item other limit cycles $\theta$ are admissible by the dynamical system as far as they cross $\ell_\pm$ twice.
	\end{enumerate}
\end{prop}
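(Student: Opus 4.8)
The plan is to prove Proposition~\ref{Dulac} by the Bendixson--Dulac criterion, applied with the one‑parameter family of weights $B(x,z)=z^{(3-p)/(p-1)}$, supplemented by a Poincaré--Bendixson index argument for the region $R^+_\lambda\cup\ell_+$ and by the description of the flow along the concavity lines in Proposition~\ref{prop flow}. Everything takes place in the open first quadrant: since $\dot x>0$ on $2Q$, since the $x$-axis is invariant and carries monotone dynamics, and since $\dot x=z/\lambda>0$ on $\{x=0\}$, no periodic orbit can meet $\partial(1Q)$.

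\emph{The Dulac computation.} Writing $\mathbf F=(f,g)$ with $g=z((1-p)x+2+a)$, and setting $\nu=\tfrac{3-p}{p-1}$ so that $\nu+1=\tfrac{2}{p-1}$, a direct calculation of $\partial_xf+\tfrac{\nu}{z}g+\partial_zg$ makes the coefficient of $x$ cancel and leaves
\[
\operatorname{div}(B\mathbf F)=z^{\nu}\Big(\tfrac{2(2+a)}{p-1}-c\Big),
\]
where, for $\M^+$, $c=\tilde N_+-2$ on $R^-_\lambda$ and $c=N-2$ on $R^+_\lambda$ (for $\M^-$, $c=\tilde N_--2$ on $R^-_\Lambda$ and $c=N-2$ on $R^+_\Lambda$). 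Thus $\operatorname{div}(B\mathbf F)$ is \emph{piecewise constant}, its sign across $\ell_\pm$ being governed exactly by $\tfrac{2(2+a)}{p-1}-(N-2)<0\Leftrightarrow p>p_\Delta^a$ and $\tfrac{2(2+a)}{p-1}-(\tilde N_\pm-2)<0\Leftrightarrow p>p^{p,a}_\pm$, with the exponents as in \eqref{critical exponents a}. Since $B\mathbf F$ is continuous and locally Lipschitz on $1Q$ (the systems \eqref{DS+}, \eqref{DS-} are continuous across $\ell_\pm$), the divergence theorem holds in the a.e.\ sense: if $\gamma$ is a periodic orbit bounding a region $D$, then $\iint_D\operatorname{div}(B\mathbf F)\,dA=\oint_\gamma B\mathbf F\cdot\mathbf n\,ds=0$ because $\mathbf F$ is tangent to $\gamma$, and any one‑signed estimate on $\operatorname{div}(B\mathbf F)$ over $D$ then gives a contradiction.

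\emph{Deducing the items.} For (i): a planar periodic orbit encloses a stationary point, but none lies in $\overline{R^+_\lambda}$ — by Lemma~\ref{stationary M+}, $M_0$ is on $\pi_1^\pm$ hence in $R^-_\lambda$ (resp.\ $R^-_\Lambda$), while $O,A_0$ lie on $\partial(1Q)$, which periodic orbits avoid; and by Proposition~\ref{prop flow}(1)--(2) the only possible tangency with $\ell_+$ is at $P$, reachable only from $R^-_\lambda$, so any orbit in $R^+_\lambda\cup\ell_+$ must leave it. For the first half of (ii): on the open region $R^-_\lambda$ the divergence equals the nonzero constant $\tfrac{2(2+a)}{p-1}-(\tilde N_+-2)$ whenever $p\neq p^{p,a}_+$, so Bendixson--Dulac excludes periodic orbits there (likewise for $\M^-$ with $p\neq p^{p,a}_-$). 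For the second half of (ii): at $p=p^{p,a}_+$ the divergence vanishes on $R^-_\lambda$ but, since $\tilde N_+-2<N-2$ when $\lambda<\Lambda$, it is strictly negative on $R^+_\lambda$; an orbit crossing $\ell_+$ would enclose positive area in $R^+_\lambda$ and contradict $\iint_D\operatorname{div}(B\mathbf F)=0$, so it cannot cross $\ell_\pm$ twice (the sign is merely reversed for $\M^-$, where $\tilde N_--2>N-2$). Item (iii) is the conjunction of (i)--(ii): an admissible limit cycle lies in neither $R^+\cup\ell$ nor $R^-\cup\ell$, so it meets $\ell_\pm$, and being a simple closed curve it does so an even number of times. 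Finally, for the global ranges: for $\M^+$, $p>p^{p,a}_+$ makes both constants negative, so $\operatorname{div}(B\mathbf F)<0$ throughout $1Q$ and no periodic orbit exists; $1<p<p_\Delta^a$ makes it $>0$ throughout $1Q$; and at the endpoint $p=p_\Delta^a$ one has $\operatorname{div}(B\mathbf F)\ge0$, strictly positive on $R^-_\lambda$, while by (i) every periodic orbit meets $R^-_\lambda$ in positive area, forcing $\iint_D\operatorname{div}(B\mathbf F)>0$. The case of $\M^-$ is symmetric, using $\tilde N_--2>N-2$.

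I expect the main obstacle to be the borderline part of (ii), namely certifying that a periodic orbit confined to $\overline{R^-_\lambda}$ cannot graze $\ell_\pm$ — this is precisely where the tangency statement of Proposition~\ref{prop flow}(2) at the point $P$ is indispensable — together with keeping every sign matched correctly against the exponents of \eqref{critical exponents a}. An alternative and shorter route is to transport everything from \cite{MNPscalar} through the diffeomorphism $(X,Z)\mapsto(x,z)=(X,XZ)$ of $1Q$ onto itself, which carries periodic orbits to periodic orbits and concavity lines to concavity lines, so that each item is simply the transcription of its counterpart there.
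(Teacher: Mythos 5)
Your proposal is correct, but it proves the proposition by a self-contained Bendixson--Dulac argument in the $(x,z)$ variables, whereas the paper's proof is exactly your closing ``alternative route'': it only observes that $\dot x>0$ in $2Q$ confines periodic orbits to $1Q$, and then transports the statement wholesale from the $(X,Z)$ system of \cite{MNPscalar} through the correspondence $x=X$, $z=XZ$. Your direct computation checks out: with $B=z^{(3-p)/(p-1)}$ one indeed gets $\operatorname{div}(B\mathbf F)=z^{\frac{3-p}{p-1}}\bigl(\tfrac{2(2+a)}{p-1}-(\tilde N_+-2)\bigr)$ on $R^-_\lambda$ and $z^{\frac{3-p}{p-1}}\bigl(\tfrac{2(2+a)}{p-1}-(N-2)\bigr)$ on $R^+_\lambda$ (and the analogous constants with $\tilde N_-$ for $\M^-$), the thresholds match $p^{p,a}_\pm$ and $p^a_\Delta$ of \eqref{critical exponents a}, and your treatment of the endpoint $p=p^a_\Delta$ and of the ``cannot cross $\ell_\pm$ twice'' part of (ii) via the sign of the area integral is sound; the only technical points worth making explicit are that Green's theorem is applied to a field that is merely locally Lipschitz across $\ell_\pm$ (a.e.\ divergence, as you note), and that the regions $1Q\cap\{z\ge\lambda(N-1)x\}$ and $1Q\cap\{z\le\lambda(N-1)x\}$ are convex, which is what guarantees that the disk bounded by a periodic orbit stays inside them and inside $1Q$, so that both the Dulac integral and your index argument for (i) apply (for (i) there is an even quicker route: $\dot x>0$ everywhere on $R^+_\lambda\cup\ell_+$, since $z>\lambda(N-1)x$ gives $\dot x>x(x+1)$ there and $\ell_+$ lies above $\pi_1^+$, so $x$ is strictly increasing and no orbit in that region can close up). What your route buys is independence from \cite{MNPscalar} and an explicit exhibition of a Dulac function adapted to the new variables, at the cost of redoing the divergence bookkeeping; what the paper's transport buys is brevity and automatic consistency with the ranges already established there, at the cost of leaving the actual Dulac mechanism hidden in the reference.
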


By Poincaré-Bendixson theorem, if a trajectory of \eqref{DS+} or \eqref{DS-} does not converge to a stationary point neither to a periodic orbit, either forward or backward in time, then it necessarily blows up. 
In the next propositions we prove that a blow up may only occur in finite time. The admissible blow-ups for $x$ in forward time are again in correspondence with $X$ in \cite{MNPscalar}.
However, blow-ups in $Z$ in $1Q$ from \cite{MNPscalar} do not occur in our system for $z$, since a blow-up there occurs at a finite time $T$ where $u^\prime (T)=0$, $R=e^{T}$, and for this we have $z(T)=R^{2+a}u^{p-1}(R)\in (0,+\infty)$.

It remains to characterize the types of blow-up for $x$ in $2Q$ backward in time.
\begin{lem}\label{lemma 2Q}
Any trajectory $\tau$ of \eqref{DS+}--\eqref{DS+ 2Q} or \eqref{DS-}--\eqref{DS- 2Q} which passes through $2Q$, with $\tau(t)=(x(t),z(t))$, is such that $x(t)\to -\infty$ and $z(t)\to 0$ as $t\to t_1$ for some $t_1\in \real$.
Also, the vector field in $2Q$ always points to the right and upwards, with $\dot{x}>0$ and $\dot{z}>0$.
\end{lem}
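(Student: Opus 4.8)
The plan is to read off the sign of the vector field on $2Q$ directly from the reduced systems \eqref{DS+ 2Q} and \eqref{DS- 2Q}, and then to follow a trajectory backward in time, showing by a Riccati-type estimate that it is forced to escape to $x=-\infty$ in finite time, with $z\to 0$ along it. First I would record the sign information: on $2Q$ one has $\dot z=z\big((1-p)x+2+a\big)>0$ since $z>0$, $x<0$, $p>1$ and $a>-1$ (so $2+a>1>0$); and, for $\M^+$, $\dot x=x(x-\tilde N_-+2)+z/\lambda>0$ because $\tilde N_-\ge N\ge 3$ makes $x-\tilde N_-+2<0$ for $x<0$, so that $x(x-\tilde N_-+2)>0$, while $z/\lambda>0$ --- the case of $\M^-$ being identical (as already noted in the Remark preceding the statement). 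This gives the last assertion of the lemma and shows that along every trajectory $x$ and $z$ are strictly monotone in $t$. Since the $x$ axis is invariant and $\dot x>0$, a trajectory $\tau$ with $\tau(t_0)\in 2Q$ cannot leave $2Q$ for $t<t_0$: it can neither cross the invariant axis $\{z=0\}$ nor reach $\{x=0\}$, which it meets only forward in time. Hence on its maximal backward interval $(t_1,t_0]$ we have $\tau(t)\in 2Q$ with $x(t)<x(t_0)<0$ and $0<z(t)\le z(t_0)$, both $x(t)$ and $z(t)$ decreasing as $t\downarrow t_1$.

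Next I would show $x(t)\to-\infty$ as $t\downarrow t_1$. If not, $x$ stays bounded below, so by monotonicity $(x(t),z(t))\to(x_\infty,z_\infty)$ with $x_\infty\le x(t_0)<0$ and $z_\infty\ge 0$; if $t_1>-\infty$ this contradicts maximality of the interval, while if $t_1=-\infty$ then $\{(x_\infty,z_\infty)\}=\alpha(\tau)$ would be a stationary point, which is impossible since Lemma \ref{stationary M+} exhibits no stationary point with $x<0$. To see that the blow-up is in finite time, I would use that on $2Q$ one has $\dot x\ge x(x-\tilde N_\mp+2)\ge x^2$ for $|x|$ large --- for $\M^+$ this already holds for every $x<0$, since $-(\tilde N_--2)x\ge 0$ --- so that $\tfrac{\mathrm{d}}{\mathrm{d}t}\big(1/|x|\big)=\dot x/x^2\ge 1$ near $t_1$; integrating on $[t,t_0]$ gives $1/|x(t_0)|-1/|x(t)|\ge t_0-t$, and since $1/|x(t)|>0$ this forces $t_0-t<1/|x(t_0)|$, hence $t_1\ge t_0-1/|x(t_0)|>-\infty$.

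Finally I would prove $z(t)\to 0$. Since $z$ decreases as $t\downarrow t_1$ it has a limit $z_\infty\ge 0$, and from $\dot z/z=(1-p)x+2+a$,
\[
\ln z(t_0)-\ln z(t)=(2+a)(t_0-t)+(p-1)\int_t^{t_0}|x(s)|\,\mathrm{d}s ,
\]
where the first term on the right is bounded as $t\downarrow t_1$ because $t_1$ is finite. For the second, $z\le z(t_0)$ on $(t_1,t_0]$ gives $\dot x\le x^2+(\tilde N_\mp-2)|x|+z(t_0)/\lambda\le 2x^2$ once $|x|\ge K$ for a suitable $K$, and changing variables $s\mapsto x$ on the part of the orbit where $x\le -K$ yields
\[
\int |x(s)|\,\mathrm{d}s=\int\frac{|x|}{\dot x}\,\mathrm{d}x\ge\int_{K}^{\infty}\frac{\mathrm{d}u}{2u}=+\infty ,
\]
so $\ln z(t)\to-\infty$ and $z(t)\to 0$. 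I expect the main obstacle to be exactly this last part, i.e.\ upgrading $x\to-\infty$ to the quantitative facts that $t_1$ is finite and that $z\to 0$ along the blow-up; both rest on the Riccati-type lower bound $\dot x\gtrsim x^2$ near $t_1$ together with ruling out that $\tau$ accumulates on the non-stationary segment $\{x<0,\ z=0\}$. It may be reassuring to note that in the $u$ variable such a backward blow-up corresponds precisely to $u$ vanishing at a finite positive inner radius, i.e.\ to the inner Dirichlet boundary.
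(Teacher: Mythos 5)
Your proof is correct, and its core is the same as the paper's: read the signs $\dot x>0$, $\dot z>0$ directly from \eqref{DS+ 2Q}--\eqref{DS- 2Q}, then force a finite-time backward blow-up of $x$ by comparison with a scalar ODE. The paper integrates the exact logistic-type inequality $\dot x\ge x(x+2-\tilde N_-)$, obtaining an explicit lower bound for $x(t)$ and an explicit blow-up time $t_1$, whereas you use the cruder Riccati bound $\dot x\ge x^2$ (plus a monotonicity/no-equilibrium argument to rule out convergence); both are equally valid, the paper's version just gives the quantitative formula for $t_1$ in one step. The genuine difference is that you also prove the assertion $z(t)\to 0$ as $t\to t_1$, via $\log z(t_0)-\log z(t)=(2+a)(t_0-t)+(p-1)\int_t^{t_0}|x|\,\rmd s$ together with the upper bound $\dot x\le 2x^2$ and the change of variables $s\mapsto x$ showing the integral diverges; the paper's written proof stops at the blow-up of $x$ and leaves the vanishing of $z$ implicit (it is only recovered through the correspondence $z=r^{2+a}u^{p-1}$ with $u$ vanishing at the inner radius $e^{t_1}$), so this part of your argument is a welcome addition. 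Two small points to tidy: for $\M^-$ the quadratic in \eqref{DS- 2Q} involves $\tilde N_+$, so the inequality $x(x-\tilde N_++2)\ge x^2$ for all $x<0$ uses $\tilde N_+\ge 2$ (otherwise keep your "for $|x|$ large, $\dot x\ge \tfrac12 x^2$" variant, which suffices); and the claim that the limit point in the case $t_1=-\infty$ must be stationary should exclude limit points on the negative $x$ axis, which your field-sign computation already does since $\dot x>0$ there.
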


\begin{proof} Let us consider the operator $\M^+$. In $2Q$ one uses the systems \eqref{DS+ 2Q} to write
\begin{center}
$\dot x = x(x+2-\tilde{N}_-)+\frac{z}{\lambda}>0$, \quad  $\dot z = z(p-1)(\alpha -x)>0$,
\end{center}
since $x<0$ and $z>0$.
Moreover, if $x(t_0)<0$ for some $t_0\in \real$, we write	
$\frac{\dot{x}}{x(x+2-\tilde{N}_-)}\geq 1$ and so
\begin{align*}
\textstyle{\frac{\rmd}{\rmd t}\,\mathrm{ln} \left(  \frac{x(t)+2-\tilde{N}_-}{x(t)}  \right)
	=\frac{\dot{x}}{x-(\tilde{N}_--2)}-\frac{\dot{x}}{x}
	=\frac{(\tilde{N}_--2) \dot{x} }{x(x+2-\tilde{N}_-)}\ge \tilde{N}_--2 \;\; \textrm{ for } t\le t_0.}
\end{align*}
By integrating in the interval $[t,t_0]$ we get 
\begin{align*}
\textstyle{ c_0\, \frac{x(t)}{x(t)-(\tilde{N}_--2)} \ge e^{(\tilde N_--2)(t_0-t)}
}\; \Rightarrow\;\;
\textstyle{x(t)\leq -\frac{\tilde{N}_--2 }{ c_0e^{(\tilde{N}_--2)(t-t_0)}-1 }}\,,\; \textrm{ where }
\textstyle{c_0=1-\frac{\tilde{N}_--2}{x(t_0)}>1},
\end{align*}
and in particular $x$ blows up at the finite time $t_1=t_0+\frac{\mathrm{ln}(1/c_0)}{\tilde{N}_--2}<t_0$.
\end{proof}

Regular or singular positive solutions of \eqref{P radial m} and \eqref{P radial m-} enjoy the monotonicity $u^{\prime}<0$ since they belong to  $\overline{1Q}.$ 
Now we obtain a priori bounds for trajectories of \eqref{DS+} or \eqref{DS-} defined for all $t$ in intervals of type $(\hat{t},+\infty)$ or $(-\infty,\hat{t})$.

\begin{prop}\label{AP bounds}
	Let $\tau$ be a trajectory of \eqref{DS+} or \eqref{DS-} in $1Q$, with $\tau(t)=(x(t),z(t))$ defined for all $t\in(\hat{t},+\infty)$, for some $\hat{t}\in \real$. Then $x(t)\in (0, \tilde{N}_\pm-2)$   for all $t\geq \hat{t}$.
	If instead, $\tau$ is defined for all $t\in(-\infty,\hat{t})$, for some $\hat{t}\in \real$, then
	\begin{align}\label{ap z}
	\textrm{$z(t)<\lambda\alpha (N+a) $ in the case of $\M^+$,\quad $z(t)<\Lambda\alpha  (N+a)$ for $\M^-$,\quad for all\; $t\leq \hat{t}$.}
	\end{align}
	In particular, if a global trajectory is defined for all $t\in \real$ in $1Q$ then it remains inside the box $(0,\tilde{N}_+-2)\times (0,\lambda\alpha (N+ a))$ in the case of $\M^+$; it stays in $(0,\tilde{N}_--2)\times (0,\Lambda\alpha (N+ a))$ for $\M^-$.
\end{prop}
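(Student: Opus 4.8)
The plan is to exploit the sign structure of the vector field on the boundary of suitable rectangular boxes together with the qualitative information already collected in Proposition~\ref{prop flow} and Lemma~\ref{stationary M+}. I treat the operator $\M^+$; the case of $\M^-$ is symmetric after replacing $\tilde N_+$ by $\tilde N_-$ and $\lambda$ by $\Lambda$ in the obvious places.

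First, suppose $\tau(t)=(x(t),z(t))\in 1Q$ is defined on $(\hat t,+\infty)$. I claim $x(t)<\tilde N_+-2$ for all $t\geq \hat t$. Indeed, from the $x$-equation in \eqref{Prob x=(X,Z)}, on the vertical line $x=\tilde N_+-2$ one has $\dot x = x\,(x-N+2)+z/\lambda>0$ in $R^+_\lambda$ and $\dot x = x\,(x-\tilde N_+ +2)+z/\Lambda = z/\Lambda>0$ in $R^-_\lambda$; by Proposition~\ref{prop flow}(3) the same sign holds on the $x$-axis portion $x>\tilde N_+-2$. Hence the half-plane $\{x>\tilde N_+-2\}$ cannot be left once entered, and a trajectory entering it would have $\dot x\geq c>0$ bounded away from zero for large $x$ (the quadratic term dominates), forcing $x\to+\infty$ in finite time by a Riccati-type comparison exactly as in the proof of Lemma~\ref{lemma 2Q}; this contradicts the trajectory being defined for all $t\geq\hat t$. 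Since $x>0$ in $1Q$, this gives $x(t)\in(0,\tilde N_+-2)$. Symmetrically, for a trajectory defined on $(-\infty,\hat t)$ I bound $z$ from above: on the horizontal line $z=\lambda\alpha(N+a)$ I must check the sign of $\dot z$, or rather argue that $z$ cannot have come from below this level. The cleanest route is to note that $z$ can only increase where $x<\alpha$ (Proposition~\ref{prop flow}(4), since $\dot z = (p-1)z(\alpha-x)$), so the supremum of $z$ along a backward-defined orbit is controlled by the value of $z$ at the last time $x$ crossed $\alpha$ going backward, combined with the a~priori bound $x<\tilde N_+-2$ and the location of the parabola $\pi_1^+$; evaluating the relevant quantities at $x=\alpha$ and using $z_0=\alpha\Lambda(\tilde N_+-2-\alpha)$ from Lemma~\ref{stationary M+} yields the explicit constant $\lambda\alpha(N+a)$.

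Concretely, for the backward bound I would set $B=\{(x,z):0<x<\tilde N_+-2,\ 0<z<\lambda\alpha(N+a)\}$ and show it is backward-invariant by checking the vector field on each side: on $x=0$ the flow enters $1Q$ by Proposition~\ref{prop flow}(3); on $x=\tilde N_+-2$ the flow points left into $B$ as computed above, so backward it cannot cross outward; on $z=0$ the axis is invariant; and on the top side $z=\lambda\alpha(N+a)$ one computes $\dot z = (p-1)z(\alpha-x)$, which has the sign of $\alpha-x$, so I would instead argue that on this top edge the incoming direction backward in time points downward into $B$ except possibly where $x<\alpha$, and there the extra room between $\lambda\alpha(N+a)$ and the maximal height of $\pi_1^+$ (attained at $x=\alpha$, value $z_0<\lambda\alpha(N+a)$, using $\tilde N_+-2-\alpha<N+a$) closes the estimate. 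A trajectory defined on all of $(-\infty,\hat t)$ cannot blow up backward (by Lemma~\ref{lemma 2Q} backward blow-up happens only in $2Q$, which is incompatible with staying in $1Q$), so its backward orbit stays in a region where these sign conditions trap it inside $B$, giving \eqref{ap z}.

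The last assertion is then immediate: a globally defined trajectory in $1Q$ satisfies both bounds simultaneously, hence lies in $(0,\tilde N_+-2)\times(0,\lambda\alpha(N+a))$.

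The main obstacle I anticipate is the backward $z$-bound and pinning down the \emph{explicit} constant $\lambda\alpha(N+a)$ rather than merely \emph{some} bound: the naive box argument fails on the top edge where $x<\alpha$ because $\dot z$ has the wrong sign there, so one genuinely needs to combine the forward bound $x<\tilde N_+-2$, the geometry of the nullcline $\pi_1^+$ (in particular its maximum height $z_0$ and the inequality $z_0<\lambda\alpha(N+a)$), and the monotonicity of $z$ dictated by the position relative to $x=\alpha$. The forward $x$-bound, by contrast, is a routine Riccati comparison once the half-plane $\{x>\tilde N_+-2\}$ is seen to be forward-invariant with a uniformly positive $\dot x$ there.
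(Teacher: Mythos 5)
Your forward bound on $x$ is fine and is essentially the paper's own argument: since $z>0$ one has $\dot x\ge x(x+2-N)$ in $R^+_\lambda$ and $\dot x\ge x(x+2-\tilde N_+)$ in $R^-_\lambda$, the half-plane $\{x>\tilde N_+-2\}$ is forward invariant (on the piece of the line $x=\tilde N_+-2$ inside $R^+_\lambda$ you should justify $\dot x>0$ by using $z>\lambda(N-1)x$, which gives $\dot x> x(x+1)$), and a Riccati comparison then forces finite-time blow-up of $x$, contradicting global forward existence; this is exactly the reduction to \cite[Proposition 2.11]{MNPscalar}.

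The backward bound \eqref{ap z} is where your proposal has a genuine gap, and it is the part you yourself flagged. Two concrete problems. First, you invoke ``the a priori bound $x<\tilde N_+-2$'' along the backward orbit, but that bound is only proved for trajectories defined for all \emph{forward} time; a trajectory defined merely on $(-\infty,\hat t)$ (for instance one corresponding to an annular solution, which blows up in $x$ at a finite forward time) need not satisfy it, so it cannot be used in this half of the proposition. Second, the geometric mechanism you propose cannot produce the constant $\lambda\alpha(N+a)$: the parabola $\pi_1^+$ is the $\dot x$-nullcline and controls nothing about the growth of $z$; orbits can lie far above $\pi_1^+$ (e.g.\ in $R^+_\lambda$ near the $z$ axis) with $z$ still increasing; moreover the vertex of $\pi_1^+$ is at $x=(\tilde N_+-2)/2$ with height $\Lambda(\tilde N_+-2)^2/4$ (not at $x=\alpha$ with height $z_0$), and that height is in general not below $\lambda\alpha(N+a)$ since $\alpha\to0$ as $p\to\infty$. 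Knowing that interior maxima of $z$ occur on $\pi_2$ is not enough; one needs an a priori bound on $z$ \emph{at the moment the orbit sits on} $\pi_2$, and that is the ingredient you are missing. The paper obtains it through the correspondence $X=x$, $Z=z/x$ with \cite{MNPscalar}: if the orbit ever meets the line $z=\lambda(N+a)x$, i.e.\ $Z$ attains the value $\lambda(N+a)$, then by the proof of (2.26) in \cite[Proposition 2.11]{MNPscalar} $Z$ blows up backward at a finite time with $X\to0$, so the orbit exits $1Q$ through the $z$ axis at a finite backward time, contradicting that it is defined in $1Q$ on all of $(-\infty,\hat t)$. Hence $z<\lambda(N+a)x$ along the entire backward orbit, and since the maximum of $z$ is attained where $\dot z=0$, i.e.\ on $\pi_2$ where $x=\alpha$, one gets exactly $z<\lambda\alpha(N+a)$. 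Without this blow-up claim (or an equivalent substitute barrier for $Z=z/x$) your sketch of the backward estimate does not close.
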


\begin{proof}
Since $z>0$, 
$\dot x \ge 
x(x+2-N) \textrm{ in } R_\lambda^+$ and $\dot x \ge 
x(x+2-\tilde{N}_+) \textrm{ in } R_\lambda^-$,
the bound for $x$ for a trajectory defined for all forward time is accomplished as in the proof of \cite[Proposition 2.11]{MNPscalar}. 

Meanwhile, with respect to the bound for $z$, we first claim that if a trajectory $\tau$ intersects the line $z=\lambda (N+a)x$ then the trajectory $\tau$ must cross the $z$ axis. Indeed, in this case $Z$ would attain the value $Z=\lambda (N+a)$, and so a blow up at a backward time $t_0\in \real$ in $Z$ would occur by the proof of (2.26) in \cite[Proposition 2.11]{MNPscalar}. Thus, $X (t)\to 0$ as $t\to t_0^+$, from which $u^\prime (t_0)=0$ with $u(t_0)>0$. Thus, $x(t_0)=0$. 
So the claim is true.
Next, we observe that a trajectory defined for all forward time attains a maximum value for $z$ at the line $\pi_2$, therefore the a priori bound \eqref{ap z} for $z$ is verified.
\end{proof}

\begin{figure}[!htb]\centering	\includegraphics[scale=0.72]{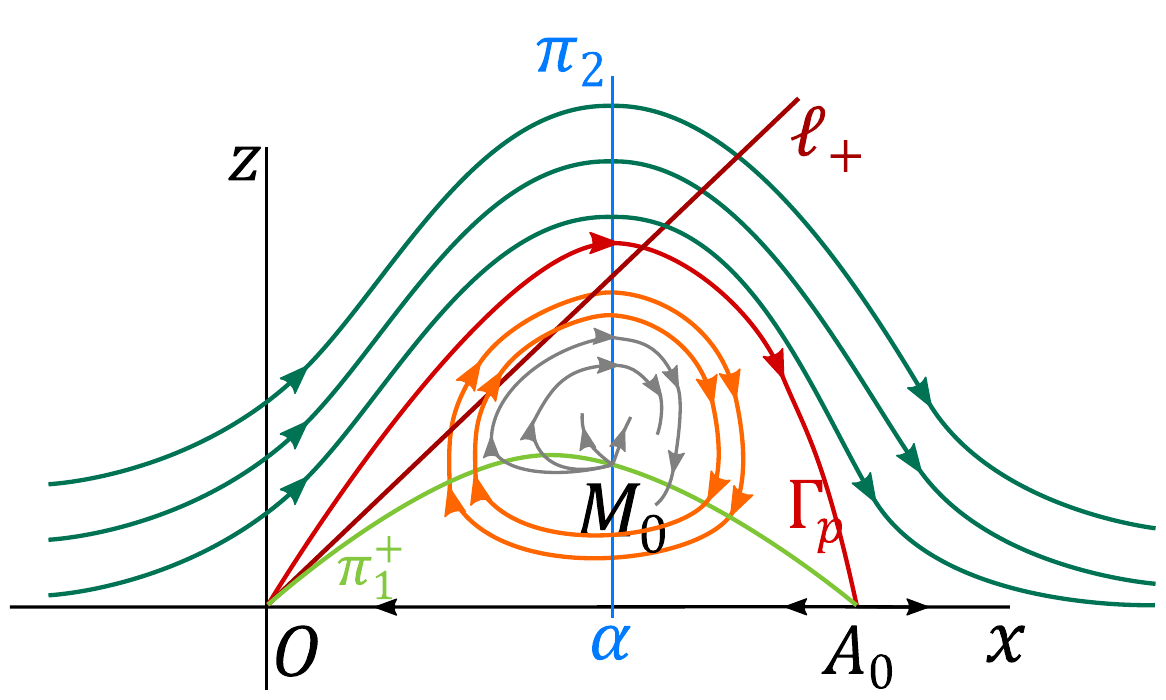}
	\caption{The flow at $p=p^{*}_{a+}$.}
	\label{fig F}
\end{figure}

\begin{figure}[!htb]\centering	\includegraphics[scale=0.54]{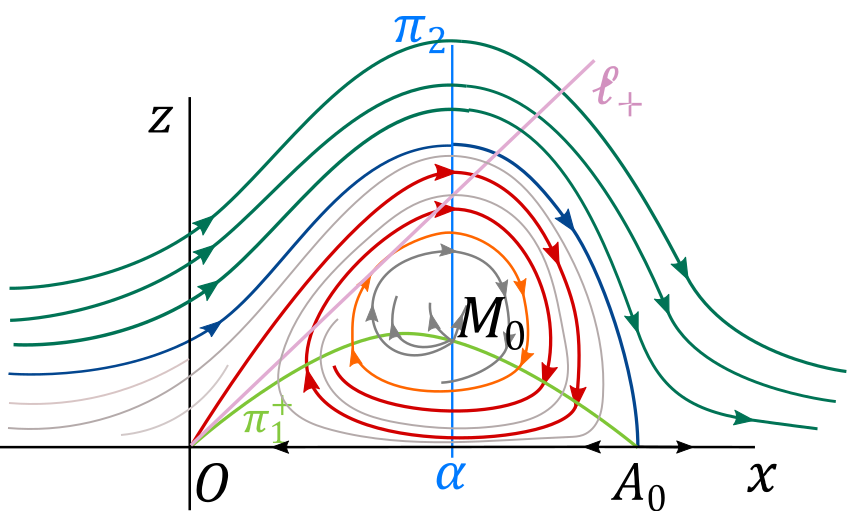}
	\caption{The flow for $p\in (p^{*}_{a+}, p^{p,a}_+)$.}
	\label{fig P}
\end{figure}

\begin{figure}[!htb]\centering	\includegraphics[scale=0.54]{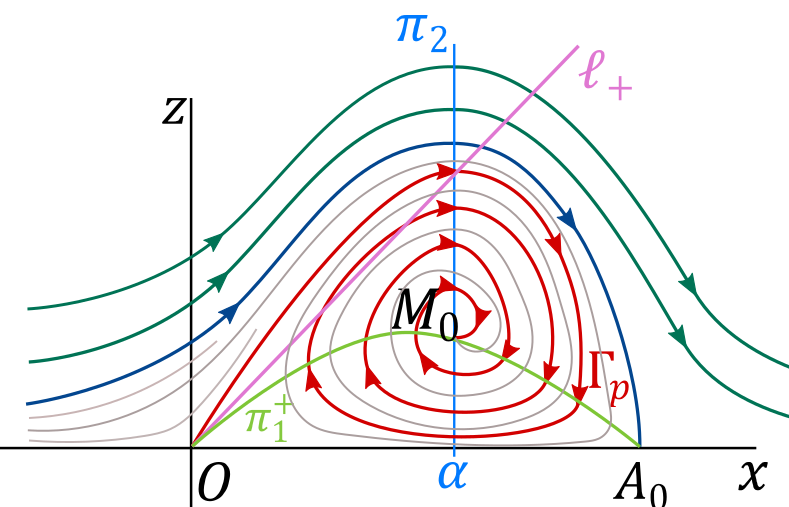}
	\caption{The flow for $p>p^{p,a}_+$.}
	\label{fig S}
\end{figure}

\begin{proof}[Proof of Theorem \ref{teo exterior}] The existence follows by the existence of trajectories produced by the dynamical system in $(x,z)$, which in turn comes from the dynamical system analysis for $(X,Z)$ in \cite{MNPscalar} properly glued via the flow in $2Q$ originated by $x,z$. More precisely, by \cite[Lemma 4.11]{MNPscalar}, for any $p>p^*_{a\pm}$, the orbit $\Upsilon_p$ defined in Proposition \ref{local study stationary points}(2) has a blow-up in $Z$ backwards at finite time $T$, which corresponds to a trajectory in $(x,z)$ which crosses the vertical $z$ axis at $T$. Then, by Lemma \ref{lemma 2Q}, $\Upsilon_p$ has a blow-up in $x$ at $t_0<T$ such that $x(t)\to -\infty$ as $t\to t_0^+$. The trajectory $\Upsilon_p$ corresponds to a fast decaying exterior domain solution of \eqref{P} in $\rN\setminus B_{r_0}$, with $r_0=e^{t_0}$.

The remaining slow decaying and pseudo-slow decaying exterior domain solutions come from the trajectories displayed in \cite[Figures 5(b) and 6]{MNPscalar}, which are again glued through the $z$ axis by our dynamical system $(x,z)$, analogously. 
\end{proof}

Even though solutions in annuli can be identified by the trajectories blowing up both in backward ($x\to -\infty$) and forward ($x\to +\infty$) times in Figures \ref{fig F}--\ref{fig S}, the existence of an annular solution for an arbitrary annulus $(\frak a, \frak b)$ is not ensured. Recall that the scaling in Remark \ref{rescaling} does not work in this case since it changes both extrema of the annulus. This will be accomplished in the next section, by using the shooting method and energy functions.

\section{Energy analysis and solutions in annuli}\label{section annuli}

In this section we introduce some energy functions and use them to establish existence of solutions in the annulus.

\begin{prop}\label{prop E2}
	For each $\delta>0$, and $u_\delta$ solution of \eqref{shooting der}, we set 
	\begin{align}\label{def E2}
	\textstyle  \mathcal{E}_{ \sigma}( r) = \frac{1}{2r^a}(u^\prime )^2 +\frac{1}{\sigma (p+1)}|u|^{p+1} \ \;\;\;\textrm{ for\, } \sigma>0.
	\end{align}
	Then the energy function \vspace{-0.5cm}
	\begin{align*}
	\mathcal{E}( r) =
	\begin{cases}
	\;\mathcal{E}_\Lambda (r)
	\;\;\textrm{ if  \;} uu^\prime>0 \smallskip
	\\
	\;\mathcal{E}_\lambda (r)
	\;\;\textrm{ if  \;} uu^\prime <0 
	\end{cases}
	\end{align*}
	is piecewisely monotone decreasing in $\{u^\prime\neq 0\}$ whenever $\tilde{N}_+\ge 3/2$.
\end{prop}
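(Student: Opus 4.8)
The idea is to differentiate $\mathcal{E}_\sigma$ along the solution and substitute \eqref{shooting der}. First I would recall that on its maximal interval of positivity $(\frak a,\rho_\delta)$ the solution $u=u_\delta$ satisfies $u>0$, so $\{u'\neq 0\}$ is a disjoint union of open subintervals on each of which $u'$ has a fixed sign; moreover $u'$ can only vanish at isolated points, since $u'\equiv 0$ on an interval would force $u$ constant and then $r^au^p\equiv 0$ by \eqref{shooting der}, which is impossible. On a subinterval where $u'>0$ one has $uu'>0$ and $\mathcal{E}=\mathcal{E}_\Lambda$; where $u'<0$ one has $uu'<0$ and $\mathcal{E}=\mathcal{E}_\lambda$. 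Since $u\in C^2$ and $r>0$, each of these is $C^1$, so it suffices to show $\frac{\rmd}{\rmd r}\mathcal{E}_\sigma\le 0$ there. Using $u>0$,
\[
\tfrac{\rmd}{\rmd r}\mathcal{E}_\sigma(r)=\frac{u'u''}{r^a}-\frac{a\,(u')^2}{2\,r^{a+1}}+\frac{u^p u'}{\sigma}.
\]

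Next I would unravel \eqref{shooting der} via \eqref{m,M+}--\eqref{m,M-} together with Remark \ref{rmk maximum point}: in every case the equation can be written as
\[
A\,u''+B\,(N-1)\,r^{-1}u'+r^a u^p=0,
\]
where $(A,B)\in\{\lambda,\Lambda\}^2$ is determined by the operator $\M^\pm$, by $\mathrm{sign}(u')$, and — when $u'<0$ — by $\mathrm{sign}(u'')$ (when $u'>0$ one automatically has $u''<0$ by Remark \ref{rmk maximum point}). Substituting $u''=-B(N-1)r^{-1}u'/A-r^au^p/A$ gives
\[
\tfrac{\rmd}{\rmd r}\mathcal{E}_\sigma(r)=-\frac{(u')^2}{r^{a+1}}\Big(\frac{B(N-1)}{A}+\frac a2\Big)+u^p u'\Big(\frac1\sigma-\frac1A\Big).
\]
The last term is $\le 0$ in all cases: if $u'>0$ then $u^pu'>0$ and $\sigma=\Lambda\ge A$; if $u'<0$ then $u^pu'<0$ and $\sigma=\lambda\le A$. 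For the bracketed term, $B/A\in\{1,\lambda/\Lambda,\Lambda/\lambda\}$, hence $B(N-1)/A\ge\frac{\lambda}{\Lambda}(N-1)=\tilde N_+-1$, so that
\[
\frac{B(N-1)}{A}+\frac a2\ \ge\ \tilde N_+-1+\frac a2\ \ge\ \frac{1+a}{2}\ >\ 0
\]
as soon as $\tilde N_+\ge 3/2$ (recall $a>-1$). Therefore $\frac{\rmd}{\rmd r}\mathcal{E}_\sigma\le 0$ on each subinterval, which yields the claim.

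One technical point I would spell out: inside a subinterval where $u'<0$ the sign of $u''$ may change, so the substitution above is itself piecewise; but $\frac{\rmd}{\rmd r}\mathcal{E}_\sigma$ is continuous (as $u\in C^2$) and, by the computation, nonpositive on each of the relatively open sets $\{u''<0\}$ and $\{u''>0\}$, hence nonpositive throughout. The only genuine work is the case bookkeeping: listing the (at most six) cases for $(A,B)$, and observing that the extremal one $A=\Lambda$, $B=\lambda$ — which occurs for $\M^-$ with $u'>0$ and for $\M^+$ with $u'<0\le u''$ — is precisely what forces the hypothesis $\tilde N_+\ge 3/2$; in all other cases $B(N-1)/A\ge N-1\ge 2$ and the monotonicity is unconditional. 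The cancellation of the $u^pu'$ terms and the identity $\frac{\lambda}{\Lambda}(N-1)=\tilde N_+-1$ are then immediate.
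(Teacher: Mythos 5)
Your proposal is correct and follows essentially the same route as the paper's proof: differentiate $\mathcal{E}_\sigma$, rewrite \eqref{shooting der} in the piecewise-linear form $Au''+B(N-1)r^{-1}u'+r^au^p=0$ (the paper's $m_{u''},m_{u'}$), use $\sigma\ge A$ when $uu'>0$ and $\sigma\le A$ when $uu'<0$ to discard the $u^pu'$ term, and reduce everything to $2(\tilde N_+-1)+a>0$, i.e.\ $\tilde N_+\ge 3/2$ with $a>-1$. The only cosmetic difference is the treatment of points where $u''=0$: you argue by continuity of $\mathcal{E}_\sigma'$, while the paper simply notes that there $u$ and $u'$ have opposite signs; both are adequate.
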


\begin{proof}
	To fix the ideas let $\delta>0$ and the operator $\M^+$. For simplicity, we write 
	\begin{center}
		$ u^{\prime\prime} +\frac{r^a}{m_{u^{\prime\prime}}}|u|^{p-1}u=-\frac{m_{u^\prime}}{m_{u^{\prime\prime}}}\frac{N-1}{r} \,u^\prime$
	\end{center}
	where $m_{s}$ is the step function defined through $m_{s} s= m_+(s)$, for $s=u^\prime (r)$ or $s=u^{\prime\prime} (r)$, whenever $u^{\prime\prime}\ne 0$. Here $m_+(s)$ is the Lipschitz function given in \eqref{m,M+}. 
	
	Set $\hat{N}-1:=\frac{m_{u^\prime}}{m_{u^{\prime\prime}}}(N-1)$ which is either $N-1$, $\tilde{N}_+-1$ or $\tilde{N}_--1$, whenever $u^{\prime\prime}\ne 0$. We have $\sigma=\Lambda\ge m_{u^{\prime\prime}}$ when $uu^\prime>0$; while $\sigma=\lambda\le m_{u^{\prime\prime}}$ when $uu^\prime <0$. Anyways it yields $\frac{uu^\prime}{\sigma}\le  \frac{uu^\prime}{m_{u^{\prime\prime}}}$, then
	\begin{align*}
	\textstyle \mathcal{E}_\sigma^\prime (r) &= - \textstyle \frac{a}{2} r^{-a-1} (u^\prime)^2 + r^{-a}u^\prime u^{\prime\prime} + \frac{1}{\sigma}\,|u|^{p-1}u u^\prime 
	\\ 
	&\le \textstyle -\frac{a}{2}r^{-a-1} (u^\prime)^2 + r^{-a} u^\prime \,\{  u^{\prime\prime} + \frac{r^a}{m_{u^{\prime\prime}}}\,|u|^{p-1}u  \} 
	\\ 
	&= \textstyle
	-r^{-a-1}  (u^\prime)^2 (\frac{a}{2} +\hat{N}-1 ) < 0 
	\end{align*} 
	whenever $u^{\prime\prime}\neq 0$ and $u^\prime \neq 0$ and $2(\hat{N} -1)+a> 0$. The latter is ensured for instance when $a>-1$ and $\tilde{N}_+\ge 3/2$. 
	Note that at a point $r_0$ where
	$u^{\prime\prime}(r_0)=0$ we have $u^\prime (r_0)$ and $u(r_0)$ with opposite signs since $u^\prime$ and $u$ cannot be both equal to zero by ODE existence and uniqueness of the initial value problem for Lipschitz nonlinearities.
\end{proof}

From the dynamical system we obtain a complete characterization of monotonicity for solutions $u_{\delta}$ of \eqref{shooting der} as follows.
Specially in this section we keep the notation in \cite{GLPradial} for $\tau=\tau_\delta$ as a radius (and not for trajectories as in the rest of the text).
\begin{lem}\label{lema monotonia}
	For any $\delta>0$ such that $u_{\delta}$ is a positive solution of \eqref{shooting der} in $[\frak a, \rho]$, with $\rho=\rho_\delta\le +\infty$, there exists a unique number $\tau=\tau (\delta)$ with $\tau\in (\frak a,\rho)$, such that 
	\begin{center}
		$u^\prime(r)>0 $ \; for $ r\in [\frak a, \tau)$\,, \quad $ u^\prime (\tau)=0$\,, \quad $   u^\prime(r)<0 $ \; for $ r\in (\tau, \rho ]\, .$
	\end{center}
\end{lem}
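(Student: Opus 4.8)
The plan is to read the sign of $u''$ directly off the equations \eqref{P radial m}, \eqref{P radial m-} at the points where $u'\ge 0$, using Remark~\ref{rmk maximum point}. Since $u=u_\delta$ satisfies $u(\frak a)=0$ and $u'(\frak a)=\delta>0$, it is increasing on a right neighbourhood of $\frak a$; by Remark~\ref{rmk maximum point}, wherever $u'\ge 0$ one has $u''<0$, so on the maximal interval on which $u'>0$ the derivative $u'$ is \emph{strictly} decreasing. I would therefore set $\tau:=\sup\{\,r\in(\frak a,\rho):\ u'>0\ \text{on}\ [\frak a,r)\,\}$, so that $u'>0$ on $[\frak a,\tau)$, and — once $\tau<\rho$ is established — $u'(\tau)=0$ by continuity. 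Because $u'$ is strictly decreasing on $[\frak a,\tau)$, it cannot have another zero there, so $\tau$ will be the only candidate zero coming from the ascending phase.

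The next step is to show $\tau<\rho$, i.e.\ that the ascending phase terminates. If $\rho<+\infty$ this is immediate: were $u'>0$ on all of $[\frak a,\rho)$, then $u$ would be strictly increasing up to $\rho$, contradicting the Dirichlet condition $\lim_{r\to\rho^-}u(r)=0=u(\frak a)$. If $\rho=+\infty$, I would argue by contradiction: assuming $u'>0$ on $[\frak a,+\infty)$, $u$ is increasing, so $c_0:=u(\frak a+1)>0$ and $u\ge c_0$ on $[\frak a+1,+\infty)$; inserting this into the radial equations \eqref{P M+ 2Q}--\eqref{P M- 2Q}, valid in $\{u'>0\}$, gives $u''(r)\le -c_0^{\,p}\,r^a/\Lambda$ for $r\ge\frak a+1$, and since $a>-1$ the integral $\int^{+\infty}s^a\,\rmd s$ diverges, forcing $u'(r)\to-\infty$ — a contradiction. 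Hence $\tau<\rho$, $u'>0$ on $[\frak a,\tau)$ and $u'(\tau)=0$.

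Finally I would establish that $u'<0$ on $(\tau,\rho)$ and, when $\rho<+\infty$, that $u'(\rho)<0$. At $\tau$ we have $u'(\tau)=0$ and $u(\tau)>0$, so $m_\pm(u'(\tau))=0$ and \eqref{P radial m}--\eqref{P radial m-} give $u''(\tau)=M_\pm(-\tau^a u(\tau)^p)<0$; thus $u'<0$ just after $\tau$. If $u'$ were not negative throughout $(\tau,\rho)$, by the intermediate value theorem there would be a first zero $\bar\tau\in(\tau,\rho)$ with $u'<0$ on $(\tau,\bar\tau)$ and $u'(\bar\tau)=0$; then $u(\bar\tau)>0$, so again $u''(\bar\tau)=M_\pm(-\bar\tau^a u(\bar\tau)^p)<0$, whereas approaching $\bar\tau$ from the left through negative values of $u'$ the mean value theorem produces points $\xi_r\to\bar\tau^-$ with $u''(\xi_r)=-u'(r)/(\bar\tau-r)>0$, whence $u''(\bar\tau)\ge 0$ — a contradiction. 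The only remaining possibility, $\rho<+\infty$ with $u'(\rho)=0$, is excluded because then $u(\rho)=u'(\rho)=0$ and ODE uniqueness would force $u\equiv 0$; this also yields $u'(\rho)<0$. Uniqueness of $\tau$ is then automatic, since $u'$ is strictly decreasing where it is nonnegative and strictly negative thereafter, so $\tau$ is its unique zero in $(\frak a,\rho)$.

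The step I expect to be the main obstacle is ruling out an infinite ascending phase in the exterior case $\rho=+\infty$: unlike the annulus case, it is not handed to us by the boundary condition, and one needs the quantitative supersolution/integration estimate above (equivalently, in phase-plane terms: the orbit of $u_\delta$ starts in $2Q$ with $x\to-\infty$ as $r\to\frak a^+$ by Lemma~\ref{lemma 2Q}, has $\dot x>0$ there, and cannot remain in $2Q$ for all $r$, so it must leave by crossing $\{x=0,\ z>0\}$ rightwards). Everything else reduces to short continuity-and-sign arguments.
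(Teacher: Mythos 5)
Your argument is correct, but it proves the lemma by a genuinely different route than the paper. The paper obtains the existence of the first critical point by passing to the $(X,Z)$ dynamical system of \cite{MNPscalar}: in the third quadrant one has $\dot X>0$, $\dot Z<0$ and a finite-time blow-up (Remark 3.3 there), which forces $u'(e^T)=0$ at some finite $T$, and then uniqueness of $\tau$ is read off from Remark \ref{rmk maximum point}. You instead terminate the ascending phase by a direct ODE estimate: for $\rho<+\infty$ the Dirichlet/positivity condition does it, and for $\rho=+\infty$ you bound $u\ge c_0>0$ beyond $\frak a+1$ and use \eqref{P M+ 2Q}--\eqref{P M- 2Q} to get $u''\le -c_0^p r^a/\Lambda$, whose integral diverges precisely because $a>-1$ — this is where the hypothesis on $a$ visibly enters, something the paper's citation hides. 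Your estimate is valid for both $\M^\pm$ (dividing by the larger constant $\Lambda$ gives the weaker, common bound), and your handling of the descending phase (sign of $u''$ at a putative second zero of $u'$, plus the mean value theorem) is just a re-derivation of Remark \ref{rmk maximum point}, so that part coincides in substance with the paper's uniqueness step. You also settle the endpoint claim $u'(\rho)<0$ via ODE uniqueness for the Lipschitz initial value problem, a point the paper's proof leaves implicit. In short: the paper's proof is shorter given the imported phase-plane machinery and fits its dynamical-systems theme; yours is self-contained and elementary, at the cost of a few extra lines, and your closing phase-plane remark (exiting $2Q$ through the $z$ axis, Lemma \ref{lemma 2Q}) is indeed the translation of the paper's actual argument.
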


\begin{proof}
	Let us observe that a first critical point exists for $u$. 
	To see this we look at the dynamical system driven by $X,Z$. In this case, the behavior at the third quadrant $X,Z<0$ is given by $\dot X >0$ and $\dot Z <0$, with a blow up at finite time $T$ such that $u^{\prime }(e^T)=0$ by \cite[Remark 3.3]{MNPscalar}, and so $\tau=e^T$.
	At such time we have $u(\tau)>0$, and so $X(T)=x(T)=0$ and $z(T)\in (0,+\infty)$.
	The uniqueness of $\tau$ follows by Remark \ref{rmk maximum point}.
\end{proof}  

If $\rho_\delta=+\infty$ then $\lim_{r\to \infty} u_\delta(r)=0$.
This comes from the a priori bounds in Proposition \ref{AP bounds}.
Thus, for any $\delta>0$ either  $\rho_{\delta}=+\infty$ and $\lim_{r\to \infty} u_{\delta}(r)=0$, or there exists some $\rho_{\delta}<+\infty$ such that $u(\rho_{\delta})=0$. Moreover,  by continuous dependence on the initial data, the function $\delta \mapsto \rho_\delta$ is continuous in a neighborhood  of any $\delta>0$ where $\rho_\delta<+\infty$ whenever $p> 1$.

We shall omit the dependence on the parameter $\delta>0$ whenever it is clear from the context.

\begin{prop}\label{prop energies}
	For any pair $\delta>0$, and $u$ of \eqref{shooting der}, the energy functions
	\begin{align*}
	\textstyle E_\lambda ( r)  &=
	\,  r^{2(\tilde N_{-}-1)+a}\; \mathcal{E} _\lambda (r)\;\;\textrm{ in }\; [\frak a, \tau ]
	\medskip
	\\
	\textstyle E_\Lambda ( r)  &=	\, r^{2(\tilde N_{-}-1)+a}\; \mathcal{E}_\Lambda (r)
	\;\;\textrm{ in }\; [\tau, \rho]
	\end{align*}
	are monotone increasing, where $\mathcal{E}_\sigma$ is given in \eqref{def E2} for $\sigma\in \{\lambda, \Lambda\}$. 
\end{prop}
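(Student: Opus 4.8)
The plan is to differentiate $E_\sigma(r) = r^{2(\tilde N_- - 1) + a}\,\mathcal{E}_\sigma(r)$ directly and show the derivative is nonnegative on the relevant subinterval. Write $\beta := 2(\tilde N_- - 1) + a$, so that
\begin{align*}
E_\sigma^\prime(r) = \beta\, r^{\beta - 1}\,\mathcal{E}_\sigma(r) + r^{\beta}\,\mathcal{E}_\sigma^\prime(r).
\end{align*}
From the computation in the proof of Proposition \ref{prop E2}, on $\{u^\prime \neq 0,\ u^{\prime\prime}\neq 0\}$ one has the \emph{identity} (not merely an inequality) when $\sigma$ is chosen to match the sign of $uu^\prime$, namely
\begin{align*}
\mathcal{E}_\sigma^\prime(r) = -\frac{a}{2} r^{-a-1}(u^\prime)^2 + r^{-a} u^\prime u^{\prime\prime} + \frac{1}{\sigma}\,|u|^{p-1} u u^\prime,
\end{align*}
and substituting the radial equation $u^{\prime\prime} = -\frac{\hat N - 1}{r} u^\prime - \frac{r^a}{m_{u^{\prime\prime}}}|u|^{p-1}u$ into this, together with $\frac{1}{\sigma}uu^\prime \le \frac{1}{m_{u^{\prime\prime}}}uu^\prime$ where relevant, turns the middle and last terms in a way that leaves a controlled remainder. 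The first step is therefore to recompute $\mathcal{E}_\sigma^\prime$ carefully, retaining the term $\frac{\hat N - 1}{r}(u^\prime)^2$ rather than discarding it, so as to get an expression of the form $\mathcal{E}_\sigma^\prime(r) = -r^{-a-1}(u^\prime)^2\big(\frac{a}{2} + \hat N - 1\big) + (\text{sign-favorable term})$.

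Next I would assemble $E_\sigma^\prime(r) = r^{\beta-1}\big[\beta\,\mathcal{E}_\sigma(r) + r\,\mathcal{E}_\sigma^\prime(r)\big]$ and examine the bracket. The $(u^\prime)^2$ contributions combine to $\big(\tfrac{\beta}{2} - \tfrac{a}{2} - (\hat N - 1)\big) r^{-a}(u^\prime)^2 = \big(\tilde N_- - \hat N\big) r^{-a}(u^\prime)^2$, which is $\ge 0$ precisely because $\hat N \in \{N, \tilde N_+, \tilde N_-\}$ always satisfies $\hat N \le \tilde N_-$ (using $\lambda \le \Lambda$, hence $N - 1 \le \tilde N_+ - 1 \le \tilde N_- - 1$ — wait, that ordering needs checking: $\tilde N_+ - 1 = \frac{\lambda}{\Lambda}(N-1) \le N-1 \le \frac{\Lambda}{\lambda}(N-1) = \tilde N_- - 1$, so indeed $\hat N \le \tilde N_-$ in all three cases). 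The $|u|^{p+1}$ contributions give a term of the form $\big(\tfrac{\beta}{\sigma(p+1)} + \tfrac{1}{\sigma}\big)|u|^{p+1}$ or similar, which is manifestly $\ge 0$ since $\beta \ge 0$ and $\sigma, p+1 > 0$. The key sign juggling is that on $[\frak a, \tau]$ one has $u^\prime > 0$ and $u > 0$, so $uu^\prime > 0$ and the matching choice is $\sigma = \Lambda$ — but the proposition claims $E_\lambda$ is the increasing one there, so I must double-check the pairing: on $[\frak a, \tau]$ the convexity/concavity structure (Remark \ref{rmk maximum point}: $u$ increasing forces $u^{\prime\prime} < 0$, so $uu^{\prime\prime} < 0$, giving $m_{u^{\prime\prime}} = \lambda$ in the $\M^+$ case), which is why $\mathcal{E}_\lambda$ rather than $\mathcal{E}_\Lambda$ is the natural energy there; the "+"/"−" subscript convention must be tracked through \eqref{m,M+}.

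I expect the main obstacle to be exactly this bookkeeping at the crossover radius $\tau$ and at the interior points where $u^{\prime\prime}$ vanishes: one must verify that $\hat N - 1 = \frac{m_{u^\prime}}{m_{u^{\prime\prime}}}(N-1)$ takes the right value on each of the two subintervals so that the coefficient $\tilde N_- - \hat N$ is genuinely nonnegative, and one must handle the measure-zero set $\{u^{\prime\prime} = 0\}$ by the continuity of $E_\sigma$ (as in Proposition \ref{prop E2}, $u$ and $u^\prime$ cannot vanish simultaneously, so $u^{\prime\prime} = 0$ occurs at isolated points and monotonicity extends across them). A secondary point is confirming $\beta = 2(\tilde N_- - 1) + a \ge 0$, which holds since $\tilde N_- \ge N \ge 3 > 1$ and $a > -1$. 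Once the coefficients are pinned down, the conclusion that the bracket is a sum of nonnegative terms, hence $E_\sigma^\prime \ge 0$, is immediate, and piecewise monotonicity on $[\frak a,\tau]$ and $[\tau,\rho]$ follows.
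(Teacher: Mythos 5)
Your overall strategy is exactly the paper's: differentiate $E_\sigma=r^{\beta}\mathcal{E}_\sigma$ with $\beta=2(\tilde N_--1)+a$, substitute the radial ODE $u^{\prime\prime}=-\frac{\hat N-1}{r}u^\prime-\frac{r^a}{m_{u^{\prime\prime}}}u^p$, and conclude from $\hat N\le \tilde N_-$ and $\beta\ge 0$; your computation of the $(u^\prime)^2$ coefficient $\tilde N_--\hat N\ge 0$ and the verification $\beta\ge0$ are correct and match the paper. However, there is one concrete slip that, as written, would break the argument: you import from Proposition \ref{prop E2} the inequality $\frac{1}{\sigma}uu^\prime\le\frac{1}{m_{u^{\prime\prime}}}uu^\prime$, but that direction belongs to the \emph{other} pairing ($\sigma=\Lambda$ where $uu^\prime>0$, $\sigma=\lambda$ where $uu^\prime<0$) and yields an upper bound on $\mathcal{E}_\sigma^\prime$, which is useless for proving $E_\sigma^\prime\ge0$. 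With the pairing of this proposition the needed comparison is the reverse one, $\frac{u^pu^\prime}{\sigma}\ge\frac{u^pu^\prime}{m_{u^{\prime\prime}}}$: after substituting the ODE, the remainder term in your bracket is $r\,u^pu^\prime\bigl(\frac{1}{\sigma}-\frac{1}{m_{u^{\prime\prime}}}\bigr)$, and its nonnegativity is exactly this reversed inequality.

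You do resolve the pairing correctly on $[\frak a,\tau]$ (there $u^{\prime\prime}\le 0$, so $m_{u^{\prime\prime}}=\lambda=\sigma$ for $\M^+$ and the remainder vanishes identically), but you leave $[\tau,\rho]$ unexamined, and that is the only place where a genuine inequality, not an identity, is used: there $u^\prime\le 0$ while $m_{u^{\prime\prime}}\in\{\lambda,\Lambda\}$, and choosing $\sigma=\Lambda\ge m_{u^{\prime\prime}}$ gives $\frac{1}{\sigma}-\frac{1}{m_{u^{\prime\prime}}}\le 0$, hence $r\,u^pu^\prime\bigl(\frac{1}{\sigma}-\frac{1}{m_{u^{\prime\prime}}}\bigr)\ge 0$ (this is the paper's step $u^{\prime\prime}u^\prime+\frac{r^a}{\Lambda}u^pu^\prime\ge u^{\prime\prime}u^\prime+\frac{r^a}{m_{u^{\prime\prime}}}u^pu^\prime=-\frac{\hat N-1}{r}(u^\prime)^2$). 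A minor related inaccuracy: the $|u|^{p+1}$ contributions do not combine to $\bigl(\frac{\beta}{\sigma(p+1)}+\frac{1}{\sigma}\bigr)|u|^{p+1}$; the pure power term only carries the coefficient $\frac{\beta}{\sigma(p+1)}\ge0$, and the $\frac{1}{\sigma}u^pu^\prime$ piece is precisely the mixed remainder above. With these two points fixed, your proof coincides with the paper's.
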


\begin{proof} Let us consider the operator $\M^+$.
	We recall that in the interval $[\frak a, \tau ]$ we have $u^\prime \ge 0$, $u^{\prime\prime}\le 0$, and so
	\begin{center}
		$	u^{\prime\prime}u^\prime +\frac{r^a}{\lambda} u^p u^\prime = - \frac{(\tilde{N}_--1) }{ r} (u^\prime )^2 .	$
	\end{center}
	On the other hand, in $[\tau, \rho]$ we have $u^\prime \le 0$, and 
	\begin{center}
		$u^{\prime\prime}u^\prime +\frac{r^a}{\Lambda} u^p u^\prime \ge u^{\prime\prime}u^\prime  +\frac{r^a}{m_{u^{\prime\prime}}}u^p u^\prime= - \frac{(\hat{N} -1) }{r} (u^\prime )^2 \ge - \frac{(\tilde{N}_--1)}{ r} (u^\prime)^2$,
	\end{center}
	where $(m_{u^{\prime\prime}}, \hat{N})$ is either $(\lambda, N)$ or $(\Lambda, \tilde{N}_+)$.\smallskip
	
	Set $\sigma=\lambda$ if $r\in [\frak a , \tau]$ and  $\sigma=\Lambda$ if $r\in [ \tau, \rho]$. In any case, for $A=2(\tilde{N}_- -1)$ we obtain
	\begin{align*}
	\textstyle E_\sigma^\prime (r) 
	&= \textstyle  Ar^{A-1} 
	\left\{ \frac{1}{2} (u^\prime)^2  +\frac{r^a}{\sigma (p+1)} u^{p+1}  
	\right\}
	+r^A
	\left\{ u^{\prime\prime} u^\prime  +\frac{r^a }{\sigma} u^{p}u^\prime +\frac{ar^{a-1}}{\sigma (p+1)} u^{p+1}
	\right\} \smallskip \\
	&\textstyle \ge r^{A-1} (u^\prime)^2\, \{ \frac{A}{2}- (\tilde{N}_- -1) \}
	= 0
	\end{align*}
where the inequality comes from $2(\tilde{N}_- -1)+a\ge 0$, which holds for $a>-1$ and $N\ge 2$.
\end{proof}

In the remaining of the section we prove Theorem \ref{teo anel}. 
This is  reduced to show that, for any given $+\infty>\frak b> \frak a>0$,  there exists a parameter $\delta>0$ such that $\rho_{\delta}=\frak b$ in addition to $u(\frak b)=0$.

We start analyzing the behavior of the solutions $u_\delta$ as $\delta$ approaches to $0$ and $+\infty$.

\begin{lem}\label{lemma delta 0}
If $\delta \to 0$ then we have
	$u (\tau_\delta)\to 0$ and 
	$\rho_{\delta}\to +\infty \, .$
\end{lem}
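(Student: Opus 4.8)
The plan is to use the monotone energy functions from Propositions \ref{prop E2} and \ref{prop energies} to control the size of $u_\delta$ on the interval where $u_\delta'>0$, and then to exploit slow dissipation to force the blow-up radius $\rho_\delta$ to infinity. First I would estimate $u(\tau_\delta)$: since $E_\lambda(r)=r^{2(\tilde N_- -1)+a}\mathcal{E}_\lambda(r)$ is increasing on $[\frak a,\tau]$ by Proposition \ref{prop energies}, and at $r=\frak a$ we have $u(\frak a)=0$, $u'(\frak a)=\delta$, so $\mathcal{E}_\lambda(\frak a)=\tfrac12\frak a^{-a}\delta^2$, one gets, evaluating at $r=\tau_\delta$ where $u'(\tau_\delta)=0$,
\begin{align*}
\frac{\tau_\delta^{\,2(\tilde N_- -1)+a}}{\lambda(p+1)}\,u(\tau_\delta)^{p+1}
= E_\lambda(\tau_\delta)\ge E_\lambda(\frak a)
= \tfrac12\,\frak a^{\,2(\tilde N_- -1)}\,\delta^2 .
\end{align*}
Since $\mathcal E_\lambda$ is \emph{decreasing} in $\{u'\neq0\}$ by Proposition \ref{prop E2}, we also have $\mathcal{E}_\lambda(\tau_\delta)\le \mathcal{E}_\lambda(\frak a)=\tfrac12\frak a^{-a}\delta^2$, i.e.\ $\tfrac{1}{\lambda(p+1)}u(\tau_\delta)^{p+1}\le \tfrac12\frak a^{-a}\delta^2$, which already gives $u(\tau_\delta)\to 0$ as $\delta\to 0$, \emph{provided} $\tau_\delta$ stays bounded away from $+\infty$. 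So the first task is to show $\tau_\delta$ does not escape to $+\infty$; combining the two displayed inequalities yields $\tau_\delta^{\,2(\tilde N_- -1)+a}\le C \frak a^{\,a}(\text{const})$, hence $\tau_\delta$ is bounded uniformly in $\delta$ (indeed $\tau_\delta\to\frak a$ as $\delta\to0$), and then $u(\tau_\delta)\to0$ follows cleanly.

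Next I would prove $\rho_\delta\to+\infty$. The idea is that once $u(\tau_\delta)$ is tiny, the solution on $[\tau_\delta,\rho_\delta)$ is a small perturbation whose energy $\mathcal{E}_\Lambda$ is small and decreasing, so $u$ and $u'$ stay small and $u$ cannot reach $0$ quickly. Concretely, on $[\tau_\delta,\rho_\delta)$ the function $\mathcal{E}_\Lambda(r)=\tfrac{1}{2r^a}(u')^2+\tfrac{1}{\Lambda(p+1)}u^{p+1}$ is decreasing (Proposition \ref{prop E2}), so $\mathcal{E}_\Lambda(r)\le \mathcal{E}_\Lambda(\tau_\delta)=\tfrac{1}{\Lambda(p+1)}u(\tau_\delta)^{p+1}=:\varepsilon_\delta\to0$. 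This bounds $|u'(r)|\le \sqrt{2}\,r^{a/2}\sqrt{\varepsilon_\delta}$ on any fixed radius range $[\tau_\delta, R]$, say by $C_R\sqrt{\varepsilon_\delta}$, and since $u(\tau_\delta)\le (\Lambda(p+1)\varepsilon_\delta)^{1/(p+1)}\to0$ as well, integrating $u'$ shows $\sup_{[\tau_\delta,R]}u_\delta\to0$ for each fixed $R$. In particular $u_\delta$ stays positive on $[\frak a,R]$ for $\delta$ small enough (it cannot hit $0$ because both $u$ and $u'$ are forced to be small, but to vanish while staying a genuine solution $u'$ at the zero would have to be strictly negative and bounded away from $0$ there — more precisely, by the monotone increasing energy $E_\Lambda$ of Proposition \ref{prop energies}, $E_\Lambda(\rho_\delta)=\tfrac{1}{2}\rho_\delta^{2(\tilde N_- -1)+a}(u'(\rho_\delta))^2 \ge E_\Lambda(\tau_\delta)=\tau_\delta^{2(\tilde N_- -1)+a}\tfrac{u(\tau_\delta)^{p+1}}{\Lambda(p+1)}>0$, so if $\rho_\delta\le R$ for all small $\delta$ we would get a contradiction by letting $\delta\to 0$ along a subsequence and extracting via continuous dependence a limiting solution that is $\equiv 0$, which is impossible). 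Hence $\rho_\delta>R$ for $\delta$ small, and since $R$ was arbitrary, $\rho_\delta\to+\infty$.

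The main obstacle is the argument that $\rho_\delta$ cannot stay bounded: one must rule out the scenario where $u_\delta$ shrinks but still manages to return to zero at a bounded radius. I expect to handle this by a compactness/continuous-dependence argument — if $\rho_{\delta_n}\le R$ along $\delta_n\to0$, pass to the limit (using continuous dependence on initial data on the fixed domain $[\frak a,R]$, valid since the nonlinearity is locally Lipschitz) to obtain a nonnegative solution of \eqref{shooting der} with $\delta=0$, which by ODE uniqueness for the Lipschitz problem must be $u\equiv0$, contradicting $u(\rho_{\delta_n})=0$ only if combined with the positivity of the limiting energy $E_\Lambda(\tau)>0$; alternatively, a direct Gronwall-type estimate on $[\tau_\delta,R]$ bounding the time for $u$ to travel from the value $u(\tau_\delta)$ down to $0$ from below. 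Either route is routine once the energy bounds above are in hand; the delicate point is just making sure the lower energy bound $E_\Lambda(\tau_\delta)>0$ is used to prevent degeneration.
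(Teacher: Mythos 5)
Your first claim, $u(\tau_\delta)\to 0$, is essentially the paper's argument and is fine in substance, but two things are off. A minor one: on $[\frak a,\tau_\delta]$ we have $uu'\ge 0$, so the energy that Proposition \ref{prop E2} makes decreasing there is $\mathcal{E}_\Lambda$, not $\mathcal{E}_\lambda$; this only changes a constant, since $\mathcal{E}_\Lambda(\tau_\delta)\le\mathcal{E}_\Lambda(\frak a)=\frac{\delta^2}{2\frak a^a}$ already gives $u(\tau_\delta)^{p+1}\le \frac{\Lambda(p+1)}{2\frak a^a}\delta^2$, with no $\tau_\delta$ appearing, so your worry that the conclusion needs $\tau_\delta$ bounded is unfounded. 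The serious problem is the ``fix'' you propose: combining $E_\lambda(\tau_\delta)\ge E_\lambda(\frak a)$ with the upper bound on $u(\tau_\delta)$ yields only a \emph{lower} bound, $\tau_\delta^{2(\tilde N_- -1)+a}\ge \frac{\lambda}{\Lambda}\,\frak a^{2(\tilde N_- -1)+a}$, never an upper bound, so your assertion that $\tau_\delta$ stays bounded and in fact $\tau_\delta\to\frak a$ as $\delta\to 0$ does not follow and is actually false. The paper proves the exact opposite: integrating $(u'r^{\tilde N_- -1})'=-\frac{r^a}{\lambda}u^p r^{\tilde N_- -1}$ over $[\frak a,\tau_\delta]$ and inserting the smallness bound on $u$ gives $\tau_\delta^{\tilde N_- +a}\ge c\,\delta^{-\frac{p-1}{p+1}}\to+\infty$, and this single estimate already finishes the lemma because $\rho_\delta\ge\tau_\delta$.

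Because you start from the false picture $\tau_\delta\to\frak a$, you are forced to show that the solution cannot return to zero at a bounded radius, and as written that argument does not close. (Incidentally, on $[\tau_\delta,\rho_\delta)$ the decreasing energy is $\mathcal{E}_\lambda$, not $\mathcal{E}_\Lambda$.) The compactness/continuous-dependence step only yields that the limiting solution is $u\equiv 0$, which is not contradictory by itself; the contradiction would require, as you yourself note, that $E_\Lambda(\tau_\delta)$ stay bounded away from $0$. But under the contradiction hypothesis $\rho_\delta\le R$ one has $\tau_\delta\le R$, hence $E_\Lambda(\tau_\delta)=\tau_\delta^{2(\tilde N_--1)+a}\frac{u(\tau_\delta)^{p+1}}{\Lambda(p+1)}\le R^{2(\tilde N_--1)+a}\frac{u(\tau_\delta)^{p+1}}{\Lambda(p+1)}\to 0$, so the lower bound you extract for $|u'(\rho_\delta)|$ degenerates and no contradiction is produced. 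The gap is repairable: your Gronwall alternative can be completed (on $[\tau_\delta,R]$, with $u'(\tau_\delta)=0$ and $0<u\le u(\tau_\delta)$, one gets $|u'|\le K_R\,u(\tau_\delta)^p$, hence $u\ge u(\tau_\delta)\bigl(1-K_R(R-\tau_\delta)\,u(\tau_\delta)^{p-1}\bigr)>0$ for $\delta$ small), or one can use the eigenvalue comparison of Step 3 of Lemma \ref{lema delta infty}, since $u(\tau_\delta)^{p-1}\ge \lambda_1^+$ of the annulus with radii $\frak a$ and $\rho_\delta$, which stays bounded below when $\rho_\delta\le R$, contradicting $u(\tau_\delta)\to 0$. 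But neither route is carried out in your proposal, so the second assertion $\rho_\delta\to+\infty$ is not actually proved, and the intermediate claim $\tau_\delta\to\frak a$ must be deleted in any case.
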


\begin{proof}
	By Proposition \ref{prop E2} we have $\mathcal{E}_\Lambda(r) \leq \mathcal{E}_\Lambda (\frak a)$ for all $r\le \tau$, that is,
	\begin{equation}\label{eq_bound above}
	\textstyle \frac{{\frak a}^a}{p+1}u^{p+1}(r) \le \frac{\Lambda}{2} \,\delta^2 \quad \textrm{ for all } \; r\in [\frak a, \tau],
	\end{equation}
	since $uu^\prime  \ge 0$ in $[\frak a, \tau]$.
	In particular, at $r=\tau=\tau_\delta$\,,
	\begin{equation*}
	\textstyle u^{p+1}(\tau_\delta)\le \frac{\Lambda (p+1)}{2 {\frak a}^a} \,\delta^2 \to 0 \;\;\textrm{ when  } \delta\to 0.
	\end{equation*}
	
	Next we write the equation for $u$ in $[\frak a, \tau]$ as
	$(u^\prime r^{\tilde{N}_- -1})^\prime = - \frac{r^a}{\lambda} u^p \,r^{\tilde N_- -1}$, and so integrating from $\frak a$ to $\tau$ produces
	\begin{align}\label{cara mu}  
	0=u^\prime (\tau)\, \tau^{\tilde N_- -1}=\delta \, {\frak a}^{\tilde N_- -1}- \frac{1}{ \lambda  }\int_{\frak a}^{\tau} s^{\tilde N_- -1+a}\, {u^p} \, .
	\end{align}

	By combining the estimate for $u$ in \eqref{eq_bound above} and equality \eqref{cara mu} we obtain
	\begin{align*}
	\delta =\frac{1}{\lambda {\frak a}^{\tilde N_- -1}} \int_{\frak a}^{\tau} s^{\tilde N_- -1+a}\, u^p\,
	\le\,
	{C_0} \,\delta^{\frac{2p}{p+1}} \; \tau^{\tilde N_-+a}
	\end{align*}
where $C_0 $ depends only on $\frak a,p, a, N,\lambda, \Lambda$,	and so
	\begin{align*}
	\tau_\delta^{\tilde N_-+a} \,\ge \,\frac{1 }{C_0\,\delta^{\frac{p-1}{p+1}}} \to +\infty \;\;\textrm{ as } \delta \to 0 \, .
	\end{align*}
	In particular, $\rho_{\delta}\ge \tau_\delta \to +\infty$ as $\delta\to 0$.
\end{proof}

\begin{lem}\label{lema delta infty}
If $\delta \to +\infty$ then  $\rho_\delta\to \frak a  $ and 
	$u (\tau_\delta)\to +\infty$.
	Moreover, for every $C_0>0$  there exists a positive constant $c_0$ depending only on $C_0, \frak a, N,p,\lambda,\Lambda$ such that
	\begin{center}
		$\delta \le C_0$ \quad implies \quad  $\rho_\delta \ge \frak a +c_0$.
	\end{center}
\end{lem}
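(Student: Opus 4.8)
The plan is to reverse the estimates of Lemma \ref{lemma delta 0}, now tracking the energy from above and the ODE identity to force $u(\tau_\delta)$ large and $\rho_\delta$ small. First I would establish the blow-up of $u(\tau_\delta)$. Starting again from the energy identity $(u'r^{\tilde N_- -1})' = -\frac{r^a}{\lambda} u^p r^{\tilde N_- -1}$ integrated on $[\frak a,\tau]$, together with monotonicity of $u$ on that interval ($0\le u(r)\le u(\tau)$) and boundedness of $\tau$ below $\rho_\delta$, one gets an upper bound for $\delta$ of the form $\delta \le C\, u^p(\tau_\delta)\, \tau_\delta^{\tilde N_- + a}$, where $\tau_\delta$ stays in a bounded set once we know $\rho_\delta$ is bounded. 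To close this we need an a priori upper bound on $\rho_\delta$ (hence on $\tau_\delta$), which I would get from the monotone energy $E_\Lambda$ of Proposition \ref{prop energies} on $[\tau,\rho]$: since $E_\Lambda$ is increasing and $u(\rho)=0$, we have $E_\Lambda(\rho)=\frac12 \rho^{2(\tilde N_- -1)+a}(u'(\rho))^2 \ge E_\Lambda(\tau) = \tau^{2(\tilde N_- -1)+a}\frac{u^{p+1}(\tau)}{\Lambda(p+1)}$, which gives a lower bound for $|u'(\rho)|$ in terms of $u(\tau)$; combined with the reverse energy bound $\mathcal{E}_\lambda$ decreasing on $[\tau,\rho]$ (Proposition \ref{prop E2}) this pins $\rho_\delta$ inside a bounded interval depending only on $\frak a$ and $u(\tau_\delta)$. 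Pushing these inequalities against each other yields simultaneously $u(\tau_\delta)\to +\infty$ and $\rho_\delta\to \frak a$ as $\delta\to+\infty$.

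For the quantitative tail statement — $\delta\le C_0 \Rightarrow \rho_\delta\ge \frak a + c_0$ — I would argue by a comparison/continuity estimate directly on the ODE \eqref{shooting der} near $r=\frak a$. On a short interval $[\frak a,\frak a+h]$, as long as $u$ stays positive, $u'$ cannot grow too fast: from $u''= M_\pm(-r^{-1}(N-1)m_\pm(u') - r^a u^p)$ and the bound $u(r)\le u(\frak a) + \delta h$ on the interval (valid as long as $u'$ has not yet changed sign, which near $\frak a$ it has not since $u'(\frak a)=\delta>0$ and $u(\frak a)=0$), one controls $|u''|$ by a constant depending only on $C_0,\frak a,N,p,\lambda,\Lambda$ and $h$. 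Hence $u'(r)\ge \delta/2$, say, and $u(r)>0$ throughout $[\frak a,\frak a+c_0]$ for a suitable small $c_0=c_0(C_0,\frak a,N,p,\lambda,\Lambda)$, so $\rho_\delta > \frak a+c_0$. The key point is that the weight $r^a$ and coefficient $r^{-1}(N-1)$ are bounded on $[\frak a,\frak a+1]$ away from the singularity, so the ODE is uniformly Lipschitz there and a standard local-existence-with-uniform-length argument applies.

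The main obstacle I expect is obtaining the a priori upper bound on $\rho_\delta$ (equivalently, showing $\tau_\delta$ cannot run off to infinity as $\delta\to\infty$) cleanly: one must combine the two energies — $\mathcal E_\Lambda$ (decreasing, Proposition \ref{prop E2}) to bound $u(\tau)$ from above by $\delta$ on $[\frak a,\tau]$, and $E_\Lambda$ (increasing, Proposition \ref{prop energies}) on $[\tau,\rho]$ to relate $u(\tau)$, $\rho$, and $|u'(\rho)|$ — and make sure the resulting chain of inequalities genuinely forces $\rho_\delta\to\frak a$ rather than merely staying bounded. Once the $\mathcal E_\sigma$–monotonicity of Propositions \ref{prop E2} and \ref{prop energies} is in hand, the rest is bookkeeping with powers of $r$ and of $\delta$ analogous to Lemma \ref{lemma delta 0}, only with the inequalities reversed.
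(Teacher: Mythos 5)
Your plan for the main assertions ($u(\tau_\delta)\to+\infty$ and $\rho_\delta\to\frak a$) has a genuine gap at its central step. You propose to bound $\rho_\delta$ (hence $\tau_\delta$) a priori by playing the increasing energy $E_\Lambda$ against the decreasing energy $\mathcal E_\lambda$ on $[\tau,\rho]$. But these two inequalities cancel and give no information on $\rho$: from $E_\Lambda(\rho)\ge E_\Lambda(\tau)$ one gets
\begin{align*}
\tfrac12\,\rho^{2(\tilde N_--1)}\,(u'(\rho))^2\;\ge\;\frac{\tau^{2(\tilde N_--1)+a}}{\Lambda(p+1)}\,u^{p+1}(\tau),
\end{align*}
while $\mathcal E_\lambda(\rho)\le\mathcal E_\lambda(\tau)$ gives $(u'(\rho))^2\le\frac{2\rho^{a}}{\lambda(p+1)}u^{p+1}(\tau)$; combining, the factor $u^{p+1}(\tau)$ cancels and what remains is $\lambda\,\tau^{2(\tilde N_--1)+a}\le\Lambda\,\rho^{2(\tilde N_--1)+a}$, which is trivially true (since $\tau\le\rho$) and pins down nothing. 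Both energies scale identically in $u(\tau)$, so no bound on $\rho_\delta$ "in terms of $\frak a$ and $u(\tau_\delta)$" can come out of this pair, and your route to $u(\tau_\delta)\to\infty$ (which leans on $\tau_\delta$ staying bounded, and tacitly on $\rho_\delta<\infty$, itself part of the conclusion) does not close; a fortiori $\rho_\delta\to\frak a$ is not reached. The paper needs three additional mechanisms that are absent from your sketch: (i) for $u(\tau_\delta)\to\infty$, a Taylor expansion of $u$ at $r=\frak a$ on a fixed small interval, showing that $u$ bounded together with $\delta_k\to\infty$ is impossible (the increasing energy only yields $\tau_k\to\infty$ when $u$ is bounded, which by itself is not a contradiction); (ii) the time-map estimate $u^{(p-1)/2}(\tau_\delta)\le C(\tau_\delta-\frak a)^{-1}$, obtained by integrating $-\tfrac12\bigl((u')^2\bigr)'\ge\frac{\frak a^a}{\lambda}u^pu'$ twice, which converts $u(\tau_\delta)\to\infty$ into $\tau_\delta\to\frak a$; and (iii) a comparison with the weighted principal eigenvalue $\lambda_1^+$ on annuli $A_{\tau_k,r}$ (with its scaling law), which bounds $u$ on $[(1+\tfrac\epsilon2)\tau_k,(1+\epsilon)\tau_k]$ and, played against the increasing energy $E_\Lambda$ (which makes $|u'|$ huge there), forces $\rho_\delta/\tau_\delta\to1$.

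By contrast, your argument for the quantitative statement ($\delta\le C_0\Rightarrow\rho_\delta\ge\frak a+c_0$) is essentially sound and is a more elementary alternative to the paper's proof, which argues the contrapositive via the maximum principle and the eigenvalue bound $u^{p-1}(\tau)\ge\lambda_1^+(\M^+,A_{\frak a,\rho})$ together with the decreasing energy. One precision is needed to make your continuation argument uniform down to small $\delta$: the bound on $u''$ on $[\frak a,\frak a+h]$ must be kept proportional to $\delta$ (indeed $|u''|\le \frac{\Lambda(N-1)}{\lambda\frak a}\delta+\frac{C}{\lambda}(\delta h)^p\le C'\delta$ for $\delta\le C_0$, using $u\le\delta h$ while $u'\ge0$), not merely "a constant depending on $C_0$ and $h$"; with a $\delta$-independent constant $K$ you would only get $u'\ge\delta-Kh$, and the admissible length $h$ would degenerate as $\delta\to0$. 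With the proportional bound, $u'\ge\delta(1-C'h)>0$ on a length $h=c_0(C_0,\frak a,N,p,\lambda,\Lambda)$, and the conclusion follows.
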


\begin{proof} We denote $A_{\frak r, \frak s}=B_{\frak r}\setminus \overline{B}_{\frak s}$ for any $\frak r >\frak s$ and fix the operator $\M^+$, for $\M^-$ it will be analogous.
	
	Step 1) $u(\tau_\delta)\to +\infty$ when $\delta \to \infty$.
	
	Assume by contradiction that there exists a sequence $\delta_k\to \infty$ with respective solutions $u_k=u_{\delta_k}$ of \eqref{shooting der}, with $\tau_k=\tau_{\delta_k}$\,, $\rho_k=\rho_{\delta_k}$\,, and  $u_k\le M$ for all $k$.
	
	Since $E_\lambda(r)\ge E_\lambda(\frak a)$ for all $r\in [\frak a , \tau_k]$ by Proposition \ref{prop energies}, then
	\begin{align}\label{bound below}
	\textstyle \tau_k^{2(\tilde{N}_- -1)+a}\; u_k^{p+1}(\tau_k)\ge \frac{\lambda(p+1)}{2} {\frak a}^{2(\tilde{N}_- -1)}\; \delta_k^2 \,\to +\infty.
	\end{align}
	Since $u_k\le M$, then $\tau_k \to \infty$ as $k\to \infty$. In particular, $\tau_k\ge \frak a +1$ for large $k$. Take $\varepsilon\in (0,1)$ with  $\varepsilon \le \frac{\frak a}{\tilde{N}_- -1}$ and $r\in [\frak a, \frak a+\varepsilon]\subset [\frak a , \tau_k]$. Then we use Taylor expansion of $u_k$ at the point $\frak a$ to write
	\begin{align}\label{taylor}
	\textstyle u_k(r) = u_k(\frak a) + u^\prime_k(\frak a)(r- \frak a) + \frac{1}{2}\, u^{\prime\prime}_k(c_k)(r- \frak a)^2 , \;\;\textrm{ for some $ c_k \in (\frak a, r)$.}
	\end{align}
	
	Now we notice that
	\begin{center}
		$\frac{\tilde{N}_- -1}{c_k} u^\prime_k(c_k)  \leq  \frac{\tilde{N}_- -1}{\frak a} \delta_k$
		\quad since\; $ u^\prime_k(c_k) \in (0,\delta_k)$.
	\end{center}
	Moreover, since $u_k^\prime$ is decreasing in $(\frak a, r)$, we have 
	$u^\prime_k(c_k) \leq \delta_k $ and so, by the second order PDE in \eqref{P radial m} and the fact that $u_k$ is increasing in $(\frak a, r)$, we deduce
	\begin{align*}
	\textstyle u^{\prime\prime}_k(c_k)=- \frac{\tilde{N}_- -1}{c_k} u^\prime_k(c_k) - \frac{c_k^a}{\lambda} u_k^p(c_k) \geq - \frac{\tilde{N}_- -1}{\frak a} \delta_k - \frac{(\frak a +1)^a}{\lambda} u_k^p(r).
	\end{align*}
	Putting this estimate into \eqref{taylor} one finds
	\begin{align*}
	\textstyle u_k(r) \geq \delta_k(r-\frak a) - \frac{\tilde{N}_- -1}{2 \frak a} \delta_k(r-\frak a)^2 - \frac{(\frak a +1)^a}{2 \lambda}u_k^p(r) \,(r-\frak a)^2.
	\end{align*}
	Finally,  by evaluating it at $r=\frak  a+\varepsilon$ it yields 
	\begin{align*}
	\textstyle 
	u_k(\frak a+\varepsilon)+  \frac{ (\frak a +1)^a\varepsilon^{2} }{2\lambda} \, u_k^p(\frak a+\varepsilon)
	\ge \, \delta_k \varepsilon \left\{ 1 - \frac{\tilde{N}_- -1}{2 \frak a} \varepsilon  \right\} \ge \frac{1}{2}\delta_k \varepsilon \;\;
	\textrm{ for sufficiently large } k,
	\end{align*}
	since $ \frac{\tilde{N}_- -1}{2 \frak a} \varepsilon \le \frac{1}{2}$. But this is impossible since $\delta_k\to +\infty$ and $u_k$ is bounded. This shows Step 1.

	\medskip
	
	Step 2) $\rho_\delta \to \frak a$ as $\delta \to +\infty$.
	
	We first show that $\tau_\delta \to \frak a $  as $\delta \to +\infty$. This will be a consequence of Step 1 and the estimate
	\begin{align}\label{conta feia}
	\textstyle u_\delta^{\frac{p-1}{2}} (\tau_\delta) \le {C_0}\,(\tau_\delta -\frak a)^{-1} ,
	\end{align}
where $C_0$ depends on $\frak a, a, p, \lambda$.
	In order to prove \eqref{conta feia}, we write for $r\in [\frak a, \tau]$ where $\tau=\tau_\delta$,
	\begin{center}
		$- \frac{1}{2}( \,u^\prime(r)^2 )^\prime\, \ge -u^{\prime\prime} u^\prime - \frac{\tilde{N}_- -1}{r} (u^\prime)^2 \ge \frac{r^a}{\lambda} u^p u^\prime \ge \frac{{\frak a}^a}{\lambda} u^p u^\prime$,
	\end{center}
	and by integrating it in $[r, \tau]$, for $r\in [\frak a, \tau)$, one gets
	\begin{center}
		$u^\prime (r)\, \ge\, C \, \{ u^{p+1}(\tau)-u^{p+1}(r) \}^{1/2}$,
	\end{center}
where $C$ depends on $\frak a, a, p, \lambda$.
Another integration in $[\frak a, \tau]$ yields
	\begin{align*}
	\textstyle	\int_{\frak a}^\tau \frac{u^\prime\, \rmd r }{\sqrt{u^{p+1}(\tau)-u^{p+1}(r)}} \,\geq\, C \int_{\frak a}^\tau \rmd r\, =\, C (\tau - \frak a).
	\end{align*}
	By using $s = u(r)$ and $u^\prime \rmd r = \rmd s$ we get
	\begin{align*}
	\textstyle	C (\tau - \frak  a)\le \int_0^{u(\tau)} \frac{\rmd s }{\sqrt{u^{p+1}(\tau)-s^{p+1}}} =   \frac{1}{u^{\frac{p+1}{2}}(\tau)} \;\int_0^1 \frac{u(\tau)\, \rmd \sigma}{\sqrt{1-\sigma^{p+1}}} = \frac{1}{u^{\frac{p-1}{2}}(\tau)} \int_0^1 \frac{ \rmd \sigma}{\sqrt{1-\sigma^{p+1}}}
	\end{align*}
	by taking $\sigma = \frac{s}{u(\tau)}$ and $\rmd\sigma = \frac{\rmd s}{u(\tau)}$, from which we deduce \eqref{conta feia}.
	
	\smallskip
	
	Now it is enough to prove that \begin{center}
		$\displaystyle\lim_{\delta \to \infty} \frac{\rho_{\delta}}{\tau_\delta}=1$.
	\end{center}
	If not, then there exists $\epsilon>0$ and a sequence $\delta_k\to \infty$, with $\rho_k=\rho_{\delta_k} \le +\infty$ and $\tau_k=\tau_{\delta_k}$ such that 
	$\rho_k>(1+\epsilon)\tau_k$ for the solutions $u_k=u_{\delta_k}$  of \eqref{shooting der}.
	In particular, $u_k$ is positive and decreasing in the interval $[\tau_k,(1+\epsilon)\tau_k]$.
	
	For $r\in (\tau_k,(1+\epsilon)\tau_k]$ we consider the annulus $A_k=A_{\tau_k, r}$ where $u_k$ solves
	\begin{center}
		$-\mathcal{M}^\pm (D^2 u_k) \ge  \, t_k |x|^a u_k $ \; in $A_k$ , \quad $u_k>0$ in $ A_k$,
	\end{center}
	where
	\begin{center}
		$t_k=\min_{A_k} u_k ^{p-1}=u_k^{p-1}(r)$.
	\end{center}
	
	Now, by the definition of first eigenvalue $\lambda_1^+({D})=\lambda_1^+(\M^+,{D})$  for the fully nonlinear Lane-Emden equation driven by $\M^+$ in the domain ${D}$ with respect to the weight $|x|^a$ (see \cite{BEQPucciradial, pq1, BQeq}), we have
	\begin{align}\label{bound eigenvalue def}
	\textstyle u_k^{p-1}(r)\le \lambda_1^+( A_k)
	,  \;\;\textrm{ for all } r\in (\tau_k \, , (1+\epsilon)\tau_k).
	\end{align}

	Note that the following scaling holds
	\begin{align}\label{scaling autov}
	\textrm{$\lambda_1^+(A_{\frak s ,\, \frak s (1+\epsilon)}\,) = \frac{1}{ {\frak s}^{2+a}} \,\lambda_1^+ (A_{1,1+\epsilon})$, \;\; for all $\frak s >0$.}
	\end{align}
	In fact, if $\lambda_1^+,\phi_1^+$ are a positive eigenvalue and eigenfunction for the operator $\M^+$ with weight $|x|^a$ in $A_{1,1+\epsilon}$ i.e.\
	\begin{center}
		$\M^+(D^2 \phi_1^+)+\lambda_1^+ |x|^a \phi_1^+=0$, \;\;  $\phi_1^+>0$ \;\; in $A_{1,1+\epsilon}$, \quad $\phi_1^+=0$ \; on\, $\partial A_{1,1+\epsilon} $
	\end{center}
	then $\mu_1^+, \psi_1^+$, where $\mu_1^+={\lambda_1^+}{\frak{s}^{-2-\frak a}}$\, and $\psi_1^+(x)=\phi_1^+(\frac{x}{\frak s})$ are a positive eigenvalue and eigenfunction in $A_{\frak s,\frak s (1+\epsilon)}$ for $\M^+$ with weight $|x|^a$.
	
	\smallskip
	
	Then, by combining \eqref{bound eigenvalue def} and \eqref{scaling autov} one finds \begin{align}\label{bound u}
	\textstyle u_k^{p-1}(r)\le\, 
	\frac{1}{{\frak a }^{2+a}}\,
	\lambda_1^+( A_{1,1+\frac{\epsilon}{2}})
	,  \;\;\textrm{ for all } r\in [(1+\frac{\epsilon}{2})\tau_k, (1+\epsilon)\tau_k].
	\end{align}

	\smallskip
	
	Using $E_\Lambda(\tau_k)\le E_\Lambda (r)$ for $r\in [\tau_k, \rho_k)$, it comes
	\begin{align}\label{est der}
	\textstyle r^{2(\tilde{N}_--1)}
	\left\{\frac{r^a}{\Lambda (p+1)} u_k^{p+1}(r)+\frac{1}{2}(u_k^\prime )^2 (r) \right\}
	&\ge\, \textstyle
	\frac{\tau_k^{{2(\tilde{N}_- -1)+a}}}{\Lambda (p+1)} \,u_k^{p+1}(\tau_k)\nonumber \\
	&\ge\, \textstyle 
	\frac{{\frak {a}}^{2(\tilde{N}_- -1)+a}}{\Lambda (p+1)} \,u_k^{p+1}(\tau_k).
	\end{align}
	
	Since $\tau_k\to \frak a$ as $k\to +\infty$ then \begin{center}
		$r\le (1+\epsilon)\tau_k\le  (1+\epsilon)(\frak a +1)$ \; for large $k$.
	\end{center} Now, by putting the latter and \eqref{bound u} into \eqref{est der} we derive 
	\begin{center}
		$(u_k^\prime )^2 (r) \ge J_k $\,,
	\end{center}
	where
	\begin{center}
		$J_k:=C_{\epsilon, p,N,\Lambda}  \{ 
		(\frak a +1 )^{-2(\tilde{N}_- -1)}{\frak a}^{2(\tilde{N}_- -1)+a}\, u_k^{p+1}(\tau_k) -(\frak a +1)^{a}
		{\frak{a}}
		^{\frac{-(2+a)(p+1)}{p-1}}
		\}$
	\end{center} 
	and $J_k\to + \infty $ as $k\to \infty$ by Step 1. Hence 
	\begin{center}
		$-u_k^\prime (r)\ge J_k^{1/2}\to +\infty $ as $k\to \infty$, \; for all\, $r\in [(1+\frac{\epsilon}{2})\tau_k \, , (1+\epsilon)\tau_k]$.
	\end{center} 
	Via integration we get
	\begin{align*}
	u_k((1+{\textstyle{\frac{\epsilon}{2}}})\tau_k)&\ge\, u_k((1+{\textstyle{\frac{\epsilon}{2}}})\tau_k)-u_k((1+{\epsilon})\tau_k)\\
	&=\,
	-\int_{(1+\epsilon/2)\tau_k}^{(1+\epsilon)\tau_k} u_k^\prime (r) \rmd r \ge \textstyle{\frac{\epsilon \tau_k}{2}}
	J_k^{1/2} \to +\infty
	\end{align*}
	which contradicts \eqref{bound u}.
	
	\smallskip
	
	Step 3)  $\delta \le C_0$ implies $\rho_\delta \ge \frak a +c_0$.
	
	Let us prove the contrapositive, that is, if $\rho_\delta \to \frak a$ then $\delta \to +\infty$.
	
	As in Step 2, if $s=\max_{A_{\frak a , \rho}} u^{p-1} =u^{p-1}(\tau)$ then $u$ solves
	\begin{center}
		$-\mathcal{M}^\pm (D^2 u) \le   |x|^a u^p\le s\,|x|^a u$  \; \; in\, $A_{\frak a , \rho}$\,, \; $u= 0$ \; on\, $\partial A_{\frak a , \rho}$\,.
	\end{center}
	
	Now, by the maximum principle for the fully nonlinear equation through the characterization of the first eigenvalue in \cite{BEQPucciradial, BQeq} (see also \cite{pq1} for the weighted version) it follows
	\begin{align}\label{cota autovalor}
	u^{p-1}(\tau) \ge \lambda_1^+ (\M^+, A_{\frak a , \rho}\,).
	\end{align}
	In fact, if we had $s<\lambda_1^+ (\M^+, A_{\frak a , \rho}\,)$ then by the mentioned maximum principle we would obtain $u\le 0$ in $A_{\frak a , \rho}$ which is impossible.
	
	Using the scaling for the eigenvalue in \eqref{scaling autov}, \eqref{eq_bound above}, and \eqref{cota autovalor}, we derive
	\begin{center}
		$\lambda_1^+(\M^+, A_{1,\, { \rho}/{\frak a} }\, ) = {\frak a}^{2+a} \, \lambda_1^+ (\M^+, A_{\frak a , \rho}\,) \le  
		{\frak a}^{2+a} \, (\,\frac{\Lambda (p+1)}{2\frak{a}^a}\,\delta^2\,)^{\frac{p-1}{p+1}}.
		$
	\end{center}
	Again by the scaling as in Step 2, $\lambda_1^+ (\M^+,D) \to +\infty$ as $|D|\to 0$. Then $\rho_\delta \to \frak a$ implies $\delta \to +\infty$.
	As a consequence, the ratio ${\rho_\delta}/{\frak a}$ remains bounded away from 1 whenever $\delta$ is bounded from above.
\end{proof}

\begin{proof}[Proof of Theorem \ref{teo anel}.]
	We fix the annulus $A_{\frak a , \frak b}$ for some $0<\frak a<\frak b$. 
	For every $\delta>0$, recall that $u_\delta $ is the unique radial solution of the initial value problem \eqref{shooting der}, with a maximal radius of positivity given by $\rho_\delta \in (\frak a, +\infty]$.
	Here, $u(\rho_\delta)=0$ if $\rho_\delta <+\infty$, while $u(r)\to 0$ as $r\to +\infty$ is $\rho_\delta=+\infty$.
	
	The mapping $\delta \to \rho _\delta$ is continuous by ODE continuous dependence on initial data. In particular, the set 
	\begin{align}\label{def conjunto D}
	\mathcal{D}=\mathcal{D}\,(p):=\{ \,\delta\in (0,+\infty) : \; \rho_\delta <+\infty \, \}
	\end{align}
	is open. By Lemma \ref{lema delta infty}, $\mathcal{D}$ is nonempty and contains an open neighborhood of $+\infty$.
	
	Let $\delta^*=\delta^*(p)$ be the infimum of the unbounded connected component of $\mathcal{D}$. 
	Since $\mathcal{D}$ is open, if $\delta^*>0$ then $\rho_{\delta^*}=+\infty$. If $\delta^*=0$ then $\lim_{\delta \to 0} \rho_\delta \ge \lim_{\delta \to 0} \tau_\delta=+\infty$ by Lemma \ref{lemma delta 0}.
	
	The function $\delta \mapsto \rho_\delta$ is well defined and leads the interval $(\delta^*, +\infty)$ onto $(\frak a,  +\infty)$ by the second part of Lemma \ref{lema delta infty}. Then there exists $\delta>0$ such that $\rho_\delta=\frak b$.
	The existence of negative solutions follows by Remark \ref{rmk G}.
\end{proof}

\begin{rmk}
	For the non weighted case $a=0$, in \cite{GILexterior2019} it was shown that $\delta^*=\inf \mathcal{D}$ for all $p$, that is $\mathcal{D}=(\delta^*, +\infty)$ is an open interval. Moreover, they prove there that at $\delta^*$ only a fast decaying solution is admissible.
\end{rmk}

\textbf{{Acknowledgments.}}
The authors would like to thank Carmem Maia Gilardoni for kindly making the figures, and for the precise comments of the anonymous referee.

L.\ Maia was supported by FAPDF, CAPES, and CNPq grant 309866/2020-0. G.\ Nornberg was supported by FAPESP grant 2018/04000-9, São Paulo Research Foundation.


\begin{thebibliography}{10}
	
	\bibitem{BCManel87}
	C.~Bandle, C.~Coffman, and M.~Marcus.
	\newblock Nonlinear elliptic problems in annular domains.
	\newblock {\em Journal of differential equations}, 69(3):322--345, 1987.
	
	\bibitem{BEQPucciradial}
	J.~Busca, M.~J. Esteban, and A.~Quaas.
	\newblock Nonlinear eigenvalues and bifurcation problems for {P}ucci's
	operators.
	\newblock {\em Ann. Inst. H. Poincar\'{e} Anal. Non Lin\'{e}aire},
	22(2):187--206, 2005.
	
	\bibitem{BdaLioSym}
	F.~Da~Lio and B.~Sirakov.
	\newblock Symmetry results for viscosity solutions of fully nonlinear uniformly
	elliptic equations.
	\newblock {\em J. Eur. Math. Soc. (JEMS)}, 9(2):317--330, 2007.
	
	\bibitem{FQTaihp}
	P.~Felmer, A.~Quaas, and M.~Tang.
	\newblock On the complex structure of positive solutions to {M}atukuma-type
	equations.
	\newblock {\em Ann. Inst. H. Poincar\'{e} Anal. Non Lin\'{e}aire},
	26(3):869--887, 2009.
	
	\bibitem{GILexterior2019}
	G.~Galise, A.~Iacopetti, and F.~Leoni.
	\newblock Liouville-type results in exterior domains for radial solutions of
	fully nonlinear equations.
	\newblock {\em Journal of Differential Equations}, 269(6):5034--5061, 2020.
	
	\bibitem{GLPradial}
	G.~Galise, F.~Leoni, and F.~Pacella.
	\newblock Existence results for fully nonlinear equations in radial domains.
	\newblock {\em Comm. Partial Differential Equations}, 42(5):757--779, 2017.
	
	\bibitem{Hale}
	J.~K. Hale and H.~Ko\c{c}ak.
	\newblock {\em Dynamics and bifurcations}, volume~3 of {\em Texts in Applied
		Mathematics}.
	\newblock Springer-Verlag, New York, 1991.
	
	\bibitem{LinPai91}
	S.-S. Lin and F.-M. Pai.
	\newblock Existence and multiplicity of positive radial solutions for
	semilinear elliptic equations in annular domains.
	\newblock {\em SIAM Journal on Mathematical Analysis}, 22(6):1500--1515, 1991.
	
	\bibitem{MNPscalar}
	L.~Maia, G.~Nornberg, and F.~Pacella.
	\newblock A dynamical system approach to a class of radial weighted fully
	nonlinear equations.
	\newblock {\em Comm. Partial Differential Equations}, 46(4):573--610, 2021.
	
	\bibitem{pq1}
	E.~Moreira~dos Santos, G.~Nornberg, D.~Schiera, and H.~Tavares.
	\newblock Principal spectral curves for {L}ane-{E}mden fully nonlinear type
	systems and applications.
	\newblock {\em arXiv:2012.07794}, 2020.
	
	\bibitem{NiNussbaum}
	W.-M. Ni and R.~D. Nussbaum.
	\newblock Uniqueness and nonuniqueness for positive radial solutions of
	{$\Delta u+f(u,r)=0$}.
	\newblock {\em Comm. Pure Appl. Math.}, 38(1):67--108, 1985.
	
	\bibitem{PacellaStolnicky1}
	F.~Pacella and D.~Stolnicki.
	\newblock On a class of fully nonlinear elliptic equations in dimension two.
	\newblock {\em J. Differential Equations}, 298:463--479, 2021.
	
	\bibitem{BQeq}
	A.~Quaas and B.~Sirakov.
	\newblock Principal eigenvalues and the {D}irichlet problem for fully nonlinear
	elliptic operators.
	\newblock {\em Adv. Math.}, 218(1):105--135, 2008.
	
\end{thebibliography}

\end{document}